\theoremstyle{plain}
\newtheorem{prop}{Proposition}[section]
\newtheorem{thm}[prop]{Theorem}
\newtheorem*{thm*}{Theorem}
\newtheorem{coro}[prop]{Corollary}
\newtheorem{lemma}[prop]{Lemma}
\newtheorem{ass}{Assumption}
\newtheorem*{lemma*}{Lemma}
\newtheorem*{prop*}{Proposition}
\theoremstyle{definition}
\theoremstyle{remark}
\newtheorem{remark}{Remark}
\numberwithin{table}{section}
\newcommand{\GL}{{\rm GL}}
\newcommand{\verbatimfont}[1]{\renewcommand{\verbatim@font}{\ttfamily#1}}
\def\ZZ{\mathbb Z}
\def\FF{\mathbb F}
\def\QQ{\mathbb Q}
\def\ff{\mathfrak f}
\def\pp{\mathfrak p}
\def\qq{\mathfrak q}
\def\<#1>{{\left\langle{#1}\right\rangle}}
\begin{document}
	
	\title{On Fermat's Last Theorem over the $\mathbb{Z}_3$-extension of $\mathbb{Q}$ and other fields}
	
	\author{Luis Dieulefait}
	\address{Universitat de Barcelona, G. V. de les Corts Catalanes 585, 08007 Barcelona, Spain} 
	\email{ldieulefait@ub.edu}
	\author{Franco Golfieri Madriaga}
	\address{Center for Research and Development in Mathematics and Applications (CIDMA),
		Department of Mathematics, University of Aveiro, 3810-193 Aveiro, Portugal}
	\email{francogolfieri@ua.pt}

	\keywords{}

		\begin{abstract}

            The main result of the present article is a proof of Fermat's Last Theorem for sufficiently large prime exponents $p$ with $p \equiv 2 \pmod{3}$ over certain number fields. A particular case of these fields are the maximal real subfields of the cyclotomic extensions $\QQ(\zeta_{3^n})$ for every $n$. 
            Our strategy consists in combining the modular method with a generalization of an arithmetic result of Pomey to these fields.
            
		\end{abstract}
	
	\maketitle

	\section{Introduction}

	
   
    Let $K$ be a number field and let denote $\mathcal{O}_K$ its ring of integers. Let us consider the Fermat equation 
    \begin{equation}
    \label{fermat:eq}
    x^p+y^p=z^p, \ \ x,y,z \in \mathcal{O}_K.
    \end{equation}
    We say that the \textbf{asymptotic Fermat's Last Theorem} holds for $K$ if there exists a constant $C_K$, depending only on $K$, such that for all primes $p>C_K$, the only solutions $(a,b,c) \in \mathcal{O}_K^3$ to the equation \eqref{fermat:eq} are the trivial ones, i.e. the solutions satisfying $abc=0$. In \cite{freitas2015asymptotic} and \cite{freitas2015elliptic}, Freitas et al. proved the asymptotic Fermat Last's Theorem over totally real quadratic number fields. In their work, they proved it by studying the $S$-unit solutions to the unit equation. 
    
    Recently, motivated by the work of Freitas, Siksek, and Le Hung, there has been a great advance in the study of asymptotic Fermat's Last Theorem. In this article, we present alternative conditions for our fields $K$, independent from the conditions of the $S$-units stated by Freitas and Siksek (see \cite[Theorem 3]{freitas2015asymptotic}).
  
  In 1931, Massouti\'e \cite{massoutie1931dernier} and Pomey \cite{pomey1931nouvelles} proved the following result:
    
    \begin{thm*}
    	\label{thm:pomey}
    If $p$ is an odd prime such that $p \equiv 2 \pmod 3$ and $x,y,z$ are non-zero integers such that $x^p+y^p+z^p=0$, then $3$ divides $x,y,$ or $z$.
    \end{thm*}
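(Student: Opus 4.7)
The plan is to argue by contradiction: assume $3\nmid xyz$ and derive an impossibility via a careful congruence analysis together with a global descent.

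First I would reduce modulo $3$. Since $p$ is odd, Fermat's little theorem yields $a^{p}\equiv a\pmod 3$ for every $a$ coprime to $3$, so the equation $x^{p}+y^{p}+z^{p}=0$ forces $x+y+z\equiv 0\pmod 3$. As $x,y,z\in\{\pm 1\}\pmod 3$, this is only possible if $x\equiv y\equiv z\pmod 3$; after possibly replacing $(x,y,z)$ by $(-x,-y,-z)$ (which preserves the equation because $p$ is odd), I may assume $x\equiv y\equiv z\equiv 1\pmod 3$. I would also normalize by dividing out $\gcd(x,y,z)$, which is coprime to $3$ by hypothesis.

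The decisive use of $p\equiv 2\pmod 3$ appears when lifting to modulo $9$. Writing $a=1+3b$ and expanding, $a^{3}\equiv 1\pmod 9$ whenever $a\equiv 1\pmod 3$; with $p=3m+2$ this gives the clean identity
\[ a^{p}\equiv a^{2}\pmod 9. \]
The equation therefore reduces to $x^{2}+y^{2}+z^{2}\equiv 0\pmod 9$. Since the squares of $\{1,4,7\}\pmod 9$ are $\{1,7,4\}$, one checks that the only multisets $\{x,y,z\}\pmod 9$ compatible with this congruence are $\{1,1,4\}$, $\{1,7,7\}$, and $\{4,4,7\}$. In each case one computes $x+y+z\equiv 6\pmod 9$ and $xyz\equiv 4\pmod 9$.

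The main obstacle is that these mod-$9$ constraints alone do not rule out solutions, since the congruences are satisfiable modulo $3^{k}$ for every $k$; a purely congruential attack will not close the argument. The plan for the final step is to feed the rigid mod-$9$ classification into the integer factorization
\[ x^{p}+y^{p}=(x+y)\sum_{i=0}^{p-1}(-1)^{i}x^{p-1-i}y^{i}=-z^{p}, \]
together with its cyclic analogues, and to use the pairwise coprimality of $x,y,z$ (which follows from $\gcd(x,y,z)=1$) to force each of $x+y$, $y+z$, $x+z$ to be essentially a $p$-th power of a $3$-coprime integer. Re-applying the identity $a^{p}\equiv a^{2}\pmod 9$ then pins down the admissible residues of these sums mod $9$, and comparing with the residues predicted by the classification of $\{x,y,z\}\pmod 9$ produces the contradiction. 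Carrying out this comparison uniformly in $p$ is the delicate step; it is precisely the arithmetic content of the Massouti\'e--Pomey argument.
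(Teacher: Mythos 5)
Your first two paragraphs are correct: the reduction to $x\equiv y\equiv z\equiv 1\pmod 3$, the identity $a^p\equiv a^2\pmod 9$ for $a\equiv 1\pmod 3$ and $p\equiv 2\pmod 3$, and the classification of the admissible residue multisets $\{1,1,4\},\{1,7,7\},\{4,4,7\}$ mod $9$ all check out. But this is only the easy, purely congruential half, and your own text concedes the point: the contradiction is never derived, and the step you defer to (``precisely the arithmetic content of the Massouti\'e--Pomey argument'') is exactly the theorem to be proved. Worse, the route you sketch for closing it cannot work as described. Feeding the mod $9$ classification into the factorization $x^p+y^p=(x+y)\sum_{i}(-1)^i x^{p-1-i}y^i$ and the Barlow-type relations $x+y=a^p$, $y+z=b^p$, $z+x=c^p$ (ignoring for the moment the unaddressed case $p\mid xyz$, where these relations acquire factors of $p$) only \emph{determines} the residues of $a,b,c$ modulo $9$; it does not constrain them contradictorily. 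For instance, with $(x,y,z)\equiv(1,1,4)\pmod 9$ one gets $x+y\equiv 2$, $y+z\equiv z+x\equiv 5\pmod 9$, and since $a^p\equiv a^{-1}\pmod 9$ for $a$ prime to $3$, this is satisfied by $a\equiv 5$, $b\equiv c\equiv 2\pmod 9$ --- perfectly consistent. As you yourself note, the congruences are satisfiable modulo every power of $3$, so no comparison of residues mod $9$ can finish the proof; a genuinely non-congruential input is required.

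That missing input is the arithmetic of the quadratic form $u^2+3v^2$ (equivalently, class number one for $\mathbb{Q}(\sqrt{-3})$), and it is the heart of the argument the paper sketches and then generalizes. Pomey forms $P=(x_i^{2p}-x_j^px_k^p)/(x_i^2-x_jx_k)$, which is an integer because $3\mid x_i^2-x_jx_k$, and shows on one hand that $P\equiv p\equiv 2\pmod 3$, and on the other hand that $4(x_i^{2p}-x_j^px_k^p)=3(x_j^p+x_k^p)^2+(x_j^p-x_k^p)^2$, so that $P$ divides a number of the form $A^2+3B^2$ with $A,B$ coprime. The key lemma (stated in the paper's introduction) that every such divisor is itself of the form $u^2+3v^2$, hence $\equiv 0$ or $1\pmod 3$, gives the contradiction. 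This representation-theoretic step --- not any refinement of the mod $9$ bookkeeping --- is what your proposal lacks, and it is precisely the ingredient the paper must replace by Assumption 1 (oddness of $h(K(\sqrt{-3}))$, via Theorem \ref{divpomey}) when passing from $\mathbb{Q}$ to more general totally real fields.
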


    In their proof, they use the following elementary result.
    \begin{lemma*}
    Let $p$ be a rational prime and $a,b$ rational coprime integers. If $p|a^2+3b^2$, then $p$ is of the form $x^2+3b^2$. 
    \end{lemma*}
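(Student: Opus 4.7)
The plan is to work in the Eisenstein integers $\mathbb{Z}[\omega]$, where $\omega = e^{2\pi i/3}$: a Euclidean domain whose norm form is $N(A + B\omega) = A^2 - AB + B^2$, and which contains the subring $\mathbb{Z}[\sqrt{-3}]$ via the identity $\sqrt{-3} = 2\omega + 1$, whose norm form is exactly $a^2 + 3b^2$. The strategy is to factor $p$ in $\mathbb{Z}[\omega]$ and then adjust the resulting prime factor by a unit so that it actually lies in the subring $\mathbb{Z}[\sqrt{-3}]$.

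First I would reduce to the case $p \equiv 1 \pmod{3}$. The case $p = 3$ is immediate, as $3 = 0^2 + 3 \cdot 1^2$. For an odd prime $p \neq 3$, the hypothesis $p \mid a^2 + 3b^2$ together with $\gcd(a,b) = 1$ forces $p \nmid b$ (otherwise $p \mid a$ too), so $-3 \equiv (ab^{-1})^2 \pmod{p}$ is a quadratic residue. A routine quadratic reciprocity computation gives $\left(\tfrac{-3}{p}\right) = \left(\tfrac{p}{3}\right)$, whence $p \equiv 1 \pmod{3}$. Under this condition $p$ splits in $\mathbb{Z}[\omega]$ as $p = \pi \bar{\pi}$ with $\pi = A + B\omega$, and this yields the presentation $A^2 - AB + B^2 = p$.

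The heart of the argument is a unit adjustment on $\pi$. The six associates of $\pi$ under the unit group $\{\pm 1, \pm \omega, \pm \omega^2\}$ have $\omega$-coefficients running through $\{\pm A, \pm B, \pm(A - B)\}$, as one checks using $\omega^2 = -1 - \omega$. Since $A = B + (A - B)$, the three integers $A, B, A - B$ cannot all be odd, so at least one is even. Replacing $\pi$ by an appropriate associate, one obtains $\pi' = A' + 2n\, \omega$ for some integers $A', n$. Substituting $2\omega = -1 + \sqrt{-3}$ rewrites this as $\pi' = (A' - n) + n\sqrt{-3} \in \mathbb{Z}[\sqrt{-3}]$, and therefore
\[
p \;=\; N(\pi') \;=\; (A' - n)^2 + 3 n^2,
\]
which is a representation of $p$ in the desired form.

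The main (in fact, only) non-routine point is the parity observation on $\{A, B, A - B\}$ that makes the unit adjustment work; the splitting of $p$ in $\mathbb{Z}[\omega]$ and the computation of the unit action are standard. I would expect the whole argument to take just a few lines once these ingredients are set up.
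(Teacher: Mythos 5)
Your proof is correct, and it is worth noting that the paper itself never proves this lemma: it is quoted as the classical ingredient of Massouti\'e and Pomey, and what the paper actually proves is the generalization (Theorem 1.2), by a different, ideal-theoretic route. There one factors $x+y\sqrt{-3}$ into prime ideals of $\mathcal{O}_{K(\sqrt{-3})}$, raises to the class number $t=h(K(\sqrt{-3}))$ to principalize, and disposes of the leftover unit by showing that totally positive units are squares when the narrow class number is odd (Lemma 3.3 together with Assumption 1). Your argument is the $K=\mathbb{Q}$ case carried out with explicit arithmetic: unique factorization in the Euclidean ring $\mathbb{Z}[\omega]$ replaces the class-number step (here $t=1$), and the finite unit group $\{\pm 1,\pm\omega,\pm\omega^2\}$ together with your parity observation on $\{A,B,A-B\}$ replaces the narrow-class-number argument. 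In fact your unit adjustment does something the paper's general proof glosses over: it descends from the maximal order $\mathbb{Z}[\omega]$, whose norm form is $A^2-AB+B^2$, to the order $\mathbb{Z}[\sqrt{-3}]$, whose norm form is literally $x^2+3y^2$, so you really produce a representation in the stated shape rather than merely a norm from the ring of integers. The trade-off is clear: your method is elementary and represents $p$ itself with exponent $1$, while the paper's method sacrifices both (it only represents $d^t$ and needs Assumption 1) in exchange for working over general totally real $K$. Two small caveats: the statement as printed has a typo ($x^2+3b^2$ should be $x^2+3y^2$), and as literally stated it fails for $p=2$ (e.g.\ $2\mid 1^2+3\cdot 1^2$ but $2\neq x^2+3y^2$); your proof, like the intended classical lemma and its use in Pomey's argument, tacitly assumes $p$ odd, and it would be worth saying so explicitly when you split into the cases $p=3$ and odd $p\neq 3$.
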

    This Lemma is the main obstacle to generalizing Pomey's result for general number fields. Motivated by Pomey's proof, we can generalize the above lemma by adding an extra hypothesis to the class field of $K$ and the ramification of $3$.
    In order to state the result, we need to consider the following notation. Let $K$ be a totally real number field and let $q$ be an odd prime. Let us denote,
    \begin{equation}\label{eq:first_conditions}
    \begin{split}
    S_q &= \{\qq : \qq \ \text{is a prime of } K \ \text{dividing} \ q\},\\
    T_q = \{\qq \in &S_q : f(\qq|q) = 1\}, \ \ V_q= \{\qq \in S_q : 2 \nmid e(\qq|q) \},
    \end{split}
    \end{equation}
    where $f(\qq|q)$ denotes the residual degree of $\qq$. 
    
    We also make the following assumption.  
\begin{ass}
Keeping the previous notation we say that $K$ satisfies Assumption~\ref{assumption} if
\begin{enumerate}
\item $T_3 \neq \emptyset$,
\item $V_3 \neq \emptyset$,
\item $h(K(\sqrt{-3}))$ is odd.
\end{enumerate}
\label{assumption}  
\end{ass}

Then, in section \ref{3nmidabc} we are able to prove the following  theorem.

\begin{thm}
\label{divpomey}
Let $K$ be a totally real field satisfying Assumption~\ref{assumption}. Let $a,b,d \in \mathcal{O}_K$ with $d$ totally positive and let $t:=h(K(\sqrt{-3}))$. Then, if $d|a^2+3b^2$, $d^t$ is of the form $x^2+3y^2$ where $x,y \in \mathcal{O}_K$.
\end{thm}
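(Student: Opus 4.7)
The plan is to adapt Pomey's classical argument by working in the CM extension $L := K(\sqrt{-3})$ and exploiting its class group. First, I would observe that $a^2 + 3b^2 = (a+b\sqrt{-3})(a-b\sqrt{-3}) = N_{L/K}(a+b\sqrt{-3})$, so the hypothesis $d \mid a^2+3b^2$ translates into the ideal divisibility $(d) \mid (a+b\sqrt{-3})(a-b\sqrt{-3})$ in $\mathcal{O}_L$. I would then set $\mathfrak{a} := \gcd\!\bigl((d), (a+b\sqrt{-3})\bigr)$, so that its Galois conjugate satisfies $\bar{\mathfrak{a}} = \gcd\!\bigl((d), (a-b\sqrt{-3})\bigr)$.

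The first key step will be to establish the ideal identity $(d) = \mathfrak{a}\bar{\mathfrak{a}}$ in $\mathcal{O}_L$ through a prime-by-prime local analysis. For each prime $\mathfrak{p}$ of $\mathcal{O}_K$ dividing $d$, I would split into cases by its behavior in $L/K$: if $\mathfrak{p}$ splits, the identity follows from a direct comparison of valuations; if $\mathfrak{p}$ is inert, I would use that $-3$ is a non-square in the residue field of $\mathfrak{p}$ to conclude that $\mathfrak{p}$ divides both $a$ and $b$ and that $v_\mathfrak{p}(d)$ is forced to be even; if $\mathfrak{p}$ ramifies in $L/K$, which can only occur above $3$, I would invoke Assumption~\ref{assumption}(1)--(2) on the residue degree and ramification index of $3$ in $K$ to control the local extension $L_\mathfrak{P}/K_\mathfrak{p}$ and recover the factorization.

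Once this is in place, I would raise to the $t$-th power: since $t = h(L)$ annihilates $\Cl(L)$, the ideal $\mathfrak{a}^t = (\alpha)$ is principal for some $\alpha \in \mathcal{O}_L$, and conjugating gives $\bar{\mathfrak{a}}^t = (\bar{\alpha})$. Consequently
\[
(d^t) = \mathfrak{a}^t \bar{\mathfrak{a}}^t = (\alpha\bar{\alpha}) = (N_{L/K}(\alpha))
\]
as ideals of $\mathcal{O}_K$, so $d^t = u \cdot N_{L/K}(\alpha)$ for some unit $u \in \mathcal{O}_K^\times$, and $u$ must be totally positive because $d$ is totally positive and norms from the CM extension $L/K$ are totally positive.

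The final step is to absorb $u$ into a norm. Combining the Hasse norm theorem for the cyclic quadratic extension $L/K$ with the oddness of $t = h(L)$ provided by Assumption~\ref{assumption}(3), I would verify the local norm conditions at each place of $K$: archimedean places are handled by total positivity, unramified finite places are automatic, and primes above $3$ are controlled by Assumption~\ref{assumption}(1)--(2). Given $\epsilon \in \mathcal{O}_L^\times$ with $N_{L/K}(\epsilon) = u$, I would set $\alpha' := \epsilon\alpha$ and, if needed, adjust by a sixth root of unity so that $\alpha' \in \mathcal{O}_K[\sqrt{-3}] \subseteq \mathcal{O}_L$; writing $\alpha' = x + y\sqrt{-3}$ with $x, y \in \mathcal{O}_K$ then yields $d^t = N_{L/K}(\alpha') = x^2 + 3y^2$. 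The hardest part, I expect, will be the ramified case of the ideal factorization at primes above $3$ together with verifying the local norm condition at those same primes; both require the full strength of Assumption~\ref{assumption} and are where the argument most significantly departs from the classical $\ZZ$ case.
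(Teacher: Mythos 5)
Your overall skeleton (factor $a+b\sqrt{-3}$ in $L=K(\sqrt{-3})$, use $t=h(L)$ to principalize, obtain $d^t\cdot u=N_{L/K}(\beta)$ with $u$ a totally positive unit, then absorb $u$ into a norm) is the same as the paper's, but two of your steps have genuine gaps. First, your ideal identity $(d)=\mathfrak{a}\bar{\mathfrak{a}}$ and in particular your inert-prime claim that ``$v_{\mathfrak{p}}(d)$ is forced to be even'' are false as stated: only $v_{\mathfrak{p}}(a^2+3b^2)$ is forced to be even at an inert prime, not the valuation of an arbitrary divisor $d$. Concretely, $K=\QQ$ satisfies Assumption~\ref{assumption} with $t=1$, and $d=5$, $a=5$, $b=0$ gives $5\mid 5^2+3\cdot 0^2$ while $5$ is not of the form $x^2+3y^2$; the same example breaks $(d)=\mathfrak{a}\bar{\mathfrak{a}}$ (there $\mathfrak{a}\bar{\mathfrak{a}}=(25)$). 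The point is that the statement needs $a,b$ coprime, which is exactly what the paper's proof in subsection~\ref{general_of_pomey} assumes (``let $x,y\in\mathcal{O}_K$ be coprime integers'', as in the classical lemma); with coprimality the odd inert primes cannot divide $a^2+3b^2$ at all and the case you are trying to salvage disappears. Your local analysis cannot succeed without that hypothesis because the unrestricted statement is false.

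Second, the real crux of the theorem is showing that the totally positive unit $u$ is a norm, and your plan --- Hasse norm theorem plus ``local conditions at $3$ controlled by Assumption~\ref{assumption}(1)--(2)'' plus ``oddness of $t$'' --- does not establish this. At a ramified prime above $3$ the local units that are norms form an index-two subgroup, and nothing about residue degree $1$ or odd ramification index tells you that an arbitrary totally positive unit lies in it; the oddness of $t$ plays no role in a local norm computation, and even if $u$ were a global norm, Hasse only produces $\gamma\in L^{\times}$, not an element of $\mathcal{O}_L^{\times}$ as you assume. The paper supplies the missing global input through genus theory: Assumption~\ref{assumption}(2) forces a prime of $K$ above $3$ to ramify in $K(\sqrt{-3})/K$, hence $h^{+}(K)\mid h(K(\sqrt{-3}))$ (Lemma~\ref{class-field-1}); Assumption~\ref{assumption}(3) then makes $h^{+}(K)$ odd, and Lemma~\ref{thm:1} shows that a totally positive unit of such a field is a square $v^2$ with $v\in\mathcal{O}_K^{\times}$, so $u=N_{L/K}(v)$ and $d^{t}=N_{L/K}(\beta/v)$ has the desired shape. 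Unless you replace your local-norm verification at the primes above $3$ by an argument of this kind (or an equivalent use of the oddness of $h(K(\sqrt{-3}))$), the final step of your proposal does not close.
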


Motivated by Pomey's proof, Theorem \ref{divpomey} enables us to prove the following generalization of Pomey's theorem for fields satisfying Assumption~\ref{assumption}.

\begin{thm}
\label{pomey_case}
Let $K$ be a totally real field satisfying Assumption~\ref{assumption}. If $p$ is and odd prime such that $p \equiv 2 \pmod 3$ and $(x,y,z) \in \mathcal{O}_K^3$ are non-zero integers such that $x^p+y^p+z^p=0$, then $3$ divides $x,y,$ or $z$.
\end{thm}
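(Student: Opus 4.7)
The plan is to adapt Pomey's classical argument over $\mathbb{Z}$, replacing the elementary lemma on primes of the form $x^2+3y^2$ by Theorem~\ref{divpomey}.

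Assume, toward a contradiction, that $3 \nmid xyz$. By clearing common ideal factors and absorbing $p$-th powers of units (possible since $K$ is totally real of full unit rank and $p$ is odd), I may assume the principal ideals $(x),(y),(z)$ are pairwise coprime in $\mathcal{O}_K$. Starting from the polynomial factorization $x^p + y^p = (x+y)\,\Phi(x,y)$ with $\Phi(x,y)=\sum_{k=0}^{p-1}(-1)^k x^{p-1-k}y^k$, a direct calculation shows that the ideal $\gcd\bigl((x+y),(\Phi(x,y))\bigr)$ divides $(p)$, and similarly for the pairs $(y,z)$ and $(x,z)$. Treating Pomey's ``first'' and ``second'' cases in parallel, each of $(x+y), (y+z), (x+z)$ becomes a $p$-th power ideal up to a controlled factor at primes above $p$.

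Next I invoke the Pomey-type algebraic identity that is available precisely because $p \equiv 2 \pmod 3$: combining the fundamental identity $4(x^2-xy+y^2)=(2x-y)^2+3y^2$ with the factorizations above, one extracts a divisibility $d \mid a^2+3b^2$ in $\mathcal{O}_K$, with $d$ totally positive (since $x^2-xy+y^2$ is totally positive on $K\times K$) and $\gcd(a,b)=(1)$. Applying Theorem~\ref{divpomey} gives $d^t = \xi^2+3\eta^2$ for some $\xi,\eta\in \mathcal{O}_K$, where $t=h(K(\sqrt{-3}))$ is odd by Assumption~\ref{assumption}(3). Since $t$ is odd and $p>t$ for $p$ sufficiently large, $\gcd(t,p)=1$, which allows the $t$-th power to be traded off against the $p$-th power structure from the previous step; one deduces that a certain prime factor appearing in the $p$-th power relation must split in $K(\sqrt{-3})/K$. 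A short local analysis, combined with Assumption~\ref{assumption}(1) (which furnishes a prime $\qq \in T_3$ of residue degree~$1$ over~$3$), shows that this splitting is incompatible with $p\equiv 2\pmod 3$, yielding the desired contradiction.

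The main obstacle is transferring Pomey's identity from $\mathbb{Z}$ to $\mathcal{O}_K$. In $\mathbb{Z}$ the isolation of the $a^2+3b^2$ divisibility is straightforward, but over a number field one must control units (whence the totally-positive hypothesis in Theorem~\ref{divpomey}) and account for the extra factor coming from $h(K(\sqrt{-3}))$ (handled by the oddness of $t$). Assumptions~\ref{assumption}(1)--(2) ensure that the mod-$3$ arithmetic of $\mathbb{Z}$ can be localized at a suitable prime of $K$ above~$3$; they are used by Theorem~\ref{divpomey} itself and once more in the final contradiction. Reconciling these ingredients simultaneously --- in particular, verifying that the $t$-th power produced by Theorem~\ref{divpomey} does not spoil the $p$-th power structure from the factorization step --- is the technical heart of the argument.
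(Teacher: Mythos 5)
There is a genuine gap: your sketch never pins down the mechanism by which $p\equiv 2\pmod 3$ produces the contradiction, and that mechanism is the whole point of Pomey's argument. The paper's proof works as follows. Pick $\pp_3\in T_3$ (Assumption~\ref{assumption}(1)), so $\mathcal{O}_K/\pp_3\cong\FF_3$; reducing $x_1^p+x_2^p+x_3^p=0$ modulo $\pp_3$ forces $x_1\equiv x_2\equiv x_3\equiv\pm1$, hence $\pp_3\mid x_i^2-x_jx_k$. One then takes the explicit cofactor $P=\bigl(x_i^{2p}-x_j^px_k^p\bigr)/\bigl(x_i^2-x_jx_k\bigr)$ and uses the polynomial identity $P=p\,(x_i^2x_jx_k)^{(p-1)/2}+m\,(x_i^2-x_jx_k)^2$ to get the key congruence $P\equiv p\pmod{\pp_3}$. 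Since $4\bigl(x_i^{2p}-x_j^px_k^p\bigr)=3(x_j^p+x_k^p)^2+(x_j^p-x_k^p)^2$, the totally positive element $P$ divides an element of the form $a^2+3b^2$, so Theorem~\ref{divpomey} yields $P^t=\xi^2+3\eta^2$, which is a square modulo $\pp_3$; but $P^t\equiv p^t\equiv 2^t\equiv 2\pmod 3$ is a nonsquare because $t$ is odd. Your proposal contains neither the quantity $P$ nor the congruence $P\equiv p\pmod{\pp_3}$, and your closing step (``a certain prime factor in the $p$-th power relation must split in $K(\sqrt{-3})/K$ \dots\ incompatible with $p\equiv 2\pmod 3$'') is an assertion without an argument, so the contradiction is never actually derived.

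Two further points. The identity $4(x^2-xy+y^2)=(2x-y)^2+3y^2$ is unconditional; it is not ``available precisely because $p\equiv 2\pmod 3$'' --- the congruence condition on $p$ enters only at the very end, through the fact that $2$ is a quadratic nonresidue mod $3$ and $t$ is odd. Also, the scaffolding you import from Kummer-style arguments is both problematic and unnecessary: reducing to pairwise coprime $(x),(y),(z)$ by ``clearing common ideal factors'' is not generally possible when $\mathcal{O}_K$ is not a PID, the theorem carries no ``$p$ sufficiently large'' or $p>t$ hypothesis (so introducing one weakens the statement), and the paper's proof uses no factorization $x^p+y^p=(x+y)\Phi(x,y)$, no $p$-th power ideal structure, and no splitting analysis in $K(\sqrt{-3})/K$.
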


Observe that for these particular fields, we cover the case of the asymptotic Fermat's Last Theorem where $3 \nmid abc$, with $(a,b,c)$ a solution to the equation \ref{fermat:eq} (under the assumption that $p \equiv 2 \pmod{3}$). 

To deal with the case $3|abc$ we use modularity results over these number fields and lowering the level results at the prime $3$. To guarantee  modularity, we need the exponent $p$ to be greater than a certain bound that will only depend on $K$.

\begin{thm}
    \label{3midabc}
    Let $K$ be a totally real field. There exists a bound $C_K$, only depending on $K$, such that if $p>C_K$, there are no non-trivial primitive solutions $(x,y,z) \in \mathcal{O}_K^3$ on $K$ to the equation $x^p+y^p+z^p=0$ satisfying $3|xyz$.
    \end{thm}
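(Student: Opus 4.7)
The strategy is to apply the standard modular method for Fermat-type equations over totally real fields. Suppose $(a,b,c)\in\mathcal{O}_K^3$ is a primitive non-trivial solution to $x^p+y^p+z^p=0$ with $3\mid abc$. After permuting coordinates and scaling by a unit, we may assume $3\mid c$. We attach to this solution the Frey elliptic curve
\[ E=E_{a,b,c}\colon Y^2=X(X-a^p)(X+b^p) \]
over $K$. A direct local computation shows that for every prime $\mathfrak{q}\nmid 2$ of $K$ dividing $abc$, the curve $E$ has multiplicative reduction at $\mathfrak{q}$ with $v_\mathfrak{q}(\Delta_E)=2p\cdot v_\mathfrak{q}(abc)$, while the contribution of primes above $2$ to the conductor is bounded in terms of $K$ alone.

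For $p$ larger than a bound depending only on $K$, the residual representation $\bar\rho_{E,p}$ is absolutely irreducible, and $E/K$ is modular, via modularity lifting theorems in the totally real setting. We then apply level lowering for Hilbert modular forms (Fujiwara, Jarvis) to obtain a Hilbert modular newform $f$ of parallel weight $2$ and level $\mathfrak{m}$, with $\mathfrak{m}$ supported only on primes of $K$ above $2$, such that $\bar\rho_{E,p}\cong\bar\rho_{f,\mathfrak{p}}$ for some prime $\mathfrak{p}\mid p$ in the coefficient field of $f$. The key role of the assumption $3\mid c$ is that the primes of $K$ above $3$ are multiplicative reduction primes of $E$ (since $3\mid abc$ but $3\nmid 2$), hence they are eliminated by level lowering, so $\mathfrak{m}$ depends only on $K$ and not on the hypothetical solution.

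The space of Hilbert modular newforms of parallel weight $2$ and level dividing $\mathfrak{m}$ is finite. For each such newform $f$, we rule out the congruence $\bar\rho_{E,p}\cong\bar\rho_{f,\mathfrak{p}}$ for sufficiently large $p$: pick an auxiliary prime $\mathfrak{q}$ of good reduction for $E$; the congruence $a_\mathfrak{q}(E)\equiv a_\mathfrak{q}(f)\pmod{\mathfrak{p}}$ together with the Hasse bound $|a_\mathfrak{q}(E)|\le 2\sqrt{N\mathfrak{q}}$ forces $p$ to divide a fixed nonzero integer of the coefficient field of $f$, unless the traces coincide on the nose. The main obstacle is handling this exceptional case, which can only occur when $f$ corresponds to an elliptic curve $E'/K$ whose Frobenius traces match those of $E$ at all chosen auxiliary primes; such $E'$ are ruled out by comparing finer invariants, such as conductor exponents at primes above $2$, existence of $K$-rational $2$-torsion, and inertial types, between $E$ and each $E'$ on the finite candidate list. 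Taking $C_K$ to be the maximum of the resulting bounds across this finite list yields the theorem.
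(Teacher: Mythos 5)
Your setup (Frey curve, modularity for $p$ large, irreducibility, level lowering to a Hilbert newform $\mathfrak{f}$ of parallel weight $2$ with level supported above $2$) matches the paper's proof. But there is a genuine gap in your final elimination step, and it stems from misreading what the hypothesis $3\mid xyz$ is for. The fact that the lowered level is supported only above $2$ does not use $3\mid abc$ at all: every odd prime $\mathfrak{q}$ of multiplicative reduction satisfies $p\mid v_{\mathfrak{q}}(\Delta_E)$ (the discriminant is $16(abc)^{2p}$), so all such primes are removed by level lowering whether or not they lie above $3$. The real role of $3\mid xyz$ is to supply a \emph{fixed} prime $\mathfrak{p}_3\mid 3$ of $K$ (independent of the putative solution) at which $E$ is multiplicative while $\mathfrak{p}_3$ does not divide the new level; the Steinberg level-raising condition then forces $a_{\mathfrak{p}_3}(\mathfrak{f})\equiv\pm(3^f+1)\pmod{\bar\omega}$ with $f=f(\mathfrak{p}_3\mid 3)$, and this congruence can \emph{never} be an equality because the Hasse--Weil bound gives $|a_{\mathfrak{p}_3}(\mathfrak{f})|\le 2\cdot 3^{f/2}<3^f+1$. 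Hence $p$ divides the norm of a fixed nonzero algebraic integer whose size is controlled by the (finite-dimensional, $K$-dependent) space of forms of level dividing a power of $2$, which bounds $p$ in terms of $K$ alone and eliminates every candidate form uniformly.

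Your substitute for this step --- comparing $a_{\mathfrak{q}}(E)$ and $a_{\mathfrak{q}}(\mathfrak{f})$ at an auxiliary prime of good reduction and handling the case of exact agreement by ``finer invariants'' (conductor exponents at $2$, rational $2$-torsion, inertial types) --- does not work in general. The trace $a_{\mathfrak{q}}(E)$ depends on the unknown solution, and for a general totally real $K$ the space of forms of level supported above $2$ may contain forms attached to elliptic curves over $K$ with full $2$-torsion and conductor supported above $2$ (e.g.\ curves with good or purely additive reduction outside $2$); such curves cannot be distinguished from the Frey curve by the invariants you list, and ruling them out is precisely the obstruction that forces Freitas--Siksek to impose conditions on solutions of the $S$-unit equation in the unrestricted asymptotic FLT. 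Without the Steinberg congruence at the fixed prime above $3$ (i.e.\ without actually using $3\mid xyz$ in the elimination), your argument cannot produce a bound $C_K$ depending only on $K$, so as written the proof is incomplete.
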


Finally, Theorem \ref{pomey_case} and Theorem $\ref{3midabc}$ imply our main result.

   \begin{thm}
    \label{thm:main}
  	Let $K$ be a totally real field satisfying Assumption~\ref{assumption}. Then there exists a bound $C_K$ such that if $p \equiv 2 \pmod 3$ and $p>C_K$, there are no non-trivial primitive solutions on $K$ to the equation $x^p+y^p+z^p=0$.
   \end{thm}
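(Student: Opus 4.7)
The plan is to deduce the statement directly by combining the two previously established results, Theorem~\ref{pomey_case} and Theorem~\ref{3midabc}, which together cover complementary cases according to whether the prime $3$ divides the solution or not.

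Concretely, suppose $(x,y,z) \in \mathcal{O}_K^3$ is a non-trivial primitive solution to $x^p+y^p+z^p=0$ with $p \equiv 2 \pmod 3$. I will split into two cases according to whether $3$ divides $xyz$. In the case $3 \nmid xyz$, Theorem~\ref{pomey_case} (which applies precisely because $K$ satisfies Assumption~\ref{assumption} and $p \equiv 2 \pmod 3$) forces $3$ to divide one of $x, y, z$, a contradiction. Hence every hypothetical solution must satisfy $3 \mid xyz$.

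In the remaining case $3 \mid xyz$, I invoke Theorem~\ref{3midabc}, which produces a bound $C_K$ depending only on $K$ such that no non-trivial primitive solution with $3 \mid xyz$ exists whenever $p > C_K$. Taking this same $C_K$ as the bound in the statement of Theorem~\ref{thm:main} (enlarging if necessary to absorb the small primes where Theorem~\ref{pomey_case} might not yet apply, though that theorem is uniform in $p$), we conclude that for $p \equiv 2 \pmod 3$ with $p > C_K$, neither case can occur, so no non-trivial primitive solution exists.

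There is essentially no obstacle here, since all of the arithmetic work—both the generalization of Pomey's theorem, which is the main novel ingredient and relies on the class number hypothesis $h(K(\sqrt{-3}))$ odd together with the ramification/residual degree conditions on the primes above $3$, and the modularity/level-lowering argument at the prime $3$—has already been carried out in the earlier theorems. The final step is purely a dichotomy argument that packages them together.
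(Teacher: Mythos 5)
Your proposal is correct and coincides with the paper's own argument: the paper also deduces Theorem~\ref{thm:main} by combining Theorem~\ref{pomey_case} (handling $3\nmid xyz$ via the generalized Pomey result) with Theorem~\ref{3midabc} (handling $3\mid xyz$ via modularity and level lowering, which supplies the bound $C_K$). Nothing further is needed.
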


   \begin{remark}
   We cannot state the result as \textit{``the asymptotic Fermat's Last Theorem holds over $K$''} since the result applies only to primes $p$ with $p\equiv 2 \pmod{3}$. This restriction comes from Theorem \ref{pomey_case}.
   \end{remark}
   
   Observe that in this result, we do not impose conditions on the S-units solutions over $K$, as in \cite[Theorem 3]{freitas2015asymptotic}. The only conditions we require are about the ramification of certain primes and the class number of $K(\sqrt{-3})$. A similar result, depending only on the class number of $K$ and the ramification of certain primes, is proved, using other techniques, for the equations $x^2+dy^6=z^p$ and $x^4+dy^2=z^p$ in \cite{gpvillagra}.
   
   This presents an advantage since, in general, it is difficult to compute the $S$-unit solutions to the unit equation on $K$. On the other hand, assuming the generalized Riemann hypothesis, it is fast to compute the class number of $K(\sqrt{-3})$. However, it does not present a full alternative to the $S$-unit approach since we only cover half of the primes, i.e. the odd primes such that $p \equiv 2 \pmod3$.

   We can then prove this partial asymptotic result for some particular number fields. One field of interest is     $\QQ(\zeta_{3^n})^+$ ,  the maximal totally real subfield of the cyclotomic extension $\QQ(\zeta_{3^n})$, for which no general result is proven. We are able to prove the following result. 

   \begin{thm}
   \label{cubic_cyclo}
   Let $n$ be a positive integer. Then there exists a bound $C$, depending only on $n$, such that if $p$ is an odd prime with $p \equiv 2 \pmod{3}$ and $p>C$, then there are no non-trivial primitive solutions on $\QQ(\zeta_{3^n})^+$ to the equation $x^p+y^p+z^p=0$.
   \end{thm}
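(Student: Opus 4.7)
The plan is to apply Theorem~\ref{thm:main} directly to $K:=\QQ(\zeta_{3^n})^+$, taking the bound $C$ of Theorem~\ref{cubic_cyclo} to be the constant $C_K$ produced by that theorem. Since $K$ is totally real, the remaining task is to verify that $K$ satisfies the three conditions of Assumption~\ref{assumption}.

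For conditions~(1) and~(2), the key input is that $3$ is totally ramified in $\QQ(\zeta_{3^n})/\QQ$, since $\Phi_{3^n}(1)=3$. Descending to the index-two real subfield $K$, there is a unique prime $\qq$ of $K$ above $3$ with residual degree $f(\qq|3)=1$ and ramification index $e(\qq|3)=3^{n-1}$. As $3^{n-1}$ is odd, $\qq$ lies in both $T_3$ and $V_3$, so neither set is empty.

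For condition~(3), observe that $\sqrt{-3}\in\QQ(\zeta_3)\subset\QQ(\zeta_{3^n})$ while $K\subset\RR$, so the nontrivial quadratic extension $K(\sqrt{-3})/K$ must coincide with the quadratic extension $\QQ(\zeta_{3^n})/K$. Hence condition~(3) amounts to showing that $h(\QQ(\zeta_{3^n}))$ is odd for every $n\ge 1$. I would derive this from the fact that $2$ is a primitive root modulo $3^n$ (so that $2$ is inert in $\QQ(\zeta_{3^n})$), combined with genus theory for the cyclic extension $\QQ(\zeta_{3^n})/\QQ$, in which only the prime $3$ ramifies; alternatively, one may simply cite the classical parity result that the ideal class group of $\QQ(\zeta_{3^n})$ has trivial $2$-part.

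The main obstacle is precisely this oddness statement for $h(\QQ(\zeta_{3^n}))$: the arithmetic of $3$ in $K$ is transparent, but a careful genus-theoretic argument (or a direct citation of the cyclotomic literature) is needed to rule out $2$-torsion in the class group uniformly in $n$. Once condition~(3) is in hand, the conclusion follows at once by applying Theorem~\ref{thm:main} with $K=\QQ(\zeta_{3^n})^+$.
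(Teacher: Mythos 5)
Your proposal is correct and follows essentially the same route as the paper: identify $K(\sqrt{-3})=\QQ(\zeta_{3^n})$, use the total ramification of $3$ (with $e=3^{n-1}$ odd, $f=1$) to get $T_3,V_3\neq\emptyset$, and invoke the oddness of $h(\QQ(\zeta_{3^n}))$ before applying Theorem~\ref{thm:main}. The parity step you flag as the main obstacle is handled in the paper exactly by the citation route you mention, namely Proposition~\ref{prop-parity} (Horie) applied with $\ell=2$.
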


  The article is organized as follows. In section \ref{3|abc} we analyze the case $3|abc$ , where $(a,b,c)$ is a putative solution to the equation \ref{fermat:eq}, and prove Theorem \ref{3midabc}. In order to do so, we use modularity results over number fields, so that we can associate a space of Hilbert modular forms to our Frey curve and then, using that $3|abc$, finally arrive at a contradiction. This is the part where the bound $C_K$ appears. This bound will only depend on the field $K$. In section \ref{3nmidabc} we analyze the case $3\nmid abc$, and prove Theorem \ref{divpomey} and Theorem $\ref{pomey_case}$. Finally, in section \ref{ejemplos} we present some examples of fields that satisfy Assumption~\ref{assumption} and prove Theorem \ref{cubic_cyclo}.

    	
    	
    	
    	
     
    \section{The case $3|xyz$} \label{3|abc}
    
    Let $K$ be a totally real number field, and let $(x,y,z)= (a,b,c) \in \mathcal{O}_K^3$ be a putative solution to the equation (\ref{fermat:eq}). Suppose $3|abc$. As in the proof of the classical Fermat's Last Theorem, we associate to this solution  the Hellegouarch-Frey elliptic curve
    \begin{equation}
    \label{frey-curve}
    E_{(a,b,c)}: y^2 = x(x-a^p)(x+b^p)
    \end{equation}
    defined over $K$. Let $\overline{\rho}_{E,p}$ denote its mod $p$ Galois representation. 

    Let us define 
    \begin{align*}
    S_K = \{\mathfrak{P} : \mathfrak{P} \ \text{is a prime of } K \ \text{above} \ 2\}.
    \end{align*} 
   
    \begin{lemma}
    For all primes $\qq \not\in S_K$, the model of $E_{(a,b,c)}$ given in \eqref{frey-curve} is minimal, semistable and satisfies $p | v_{\qq}(\Delta_{E_{(a,b,c)}})$. The representation $\Bar{\rho}_{E_{(a,b,c)},p}$ is odd, its determinant is the cyclotomic character and is finite flat at every prime $\pp$ of $K$ dividing $p$. Moreover, its conductor is
    \begin{equation}
    \label{conductor}
    \mathcal{N}_{E_{(a,b,c)}} = \prod_{\mathfrak{P} \in S_K} \mathfrak{P}^{r_{\beta}} 
    \end{equation}
    where $0 \leq r_{\mathfrak{P}} \leq 2+6 v_{\mathfrak{P}}(2)$.
    \end{lemma}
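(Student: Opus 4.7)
The plan is to compute the standard invariants of the Weierstrass model (\ref{frey-curve}) and then analyze the reduction type prime by prime. A direct expansion of $y^2=x(x-a^p)(x+b^p)$ yields
\[
c_4 = 16\,(a^{2p}+a^pb^p+b^{2p}), \qquad \Delta_{E_{(a,b,c)}} = 16\,(abc)^{2p}.
\]
Fix a prime $\mathfrak{q}\notin S_K$. Primitivity of $(a,b,c)$ implies that $\mathfrak{q}$ divides at most one of $a,b,c$; if it divides none, the model has good reduction at $\mathfrak{q}$, and if it divides (say) $a$, then $\mathfrak{q}\nmid 2bc$ gives $v_{\mathfrak{q}}(c_4)=0$ while $v_{\mathfrak{q}}(\Delta)=2p\,v_{\mathfrak{q}}(a)>0$. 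Tate's algorithm at odd residue characteristic (which is governed purely by whether $v_{\mathfrak{q}}(c_4)\geq 4$ and $v_{\mathfrak{q}}(\Delta)\geq 12$) then certifies that the model is minimal and semistable, with multiplicative reduction in the second case, and $p\mid v_{\mathfrak{q}}(\Delta_{E_{(a,b,c)}})$ in both.

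Next I would verify the properties of $\overline{\rho}_{E_{(a,b,c)},p}$. Its determinant equals the mod-$p$ cyclotomic character by the Weil pairing on $E_{(a,b,c)}[p]$, and oddness then follows from $K$ being totally real, since any complex conjugation inverts $p$-th roots of unity. For a prime $\mathfrak{p}\mid p$ of $K$, which automatically lies outside $S_K$ because $p$ is odd, the previous paragraph gives either good reduction -- in which case $E_{(a,b,c)}[p]$ extends to the $p$-torsion of the N\'eron model and is visibly finite flat -- or multiplicative reduction with $p\mid v_{\mathfrak{p}}(\Delta_{E_{(a,b,c)}})$. In the multiplicative case the Tate parametrization realizes $E_{(a,b,c)}[p]$ as an extension of a finite \'etale group scheme by $\mu_p$, and the divisibility $p\mid v_{\mathfrak{p}}(\Delta)$ is exactly what makes this extension prolong to a finite flat group scheme over $\mathcal{O}_{K_{\mathfrak{p}}}$.

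For the conductor formula (\ref{conductor}), which I read as the Artin conductor of $\overline{\rho}_{E_{(a,b,c)},p}$ (i.e.\ the level at which one will seek a matching Hilbert modular form after level-lowering), the two paragraphs above show that the representation is unramified at every $\mathfrak{q}\notin S_K$ with $\mathfrak{q}\nmid p$: inertia either acts trivially (good reduction) or via a character of order dividing $p$ that becomes trivial thanks to $p\mid v_{\mathfrak{q}}(\Delta)$, the classical criterion from the Tate curve. At every $\mathfrak{p}\mid p$ the representation is finite flat. Hence the conductor is supported on $S_K$, and the uniform upper bound $0\leq r_{\mathfrak{P}}\leq 2+6v_{\mathfrak{P}}(2)$ at each $\mathfrak{P}\in S_K$ is the Brumer--Kramer bound for the conductor exponent of an elliptic curve at a prime of residue characteristic $2$.

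The main obstacle is precisely this last uniform bound at primes above $2$: there the reduction can be wildly additive and the simplified form of Tate's algorithm used at odd primes does not suffice, so one must invoke the Brumer--Kramer bound (which ultimately rests on a careful analysis of inertial representations and Papadopoulos--Tate tables in residue characteristic $2$). Everything else in the lemma is a routine application of Tate's algorithm and the Tate curve criterion for finite flatness.
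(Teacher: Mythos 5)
Your proposal is correct and takes essentially the same route as the paper: the paper computes $\Delta_{E_{(a,b,c)}}=16(abc)^{2p}$ and then cites \cite[Lemma 3.3]{freitas2015asymptotic} for minimality, semistability and the conductor bound at the primes of $S_K$, the Weil pairing for determinant and oddness, and \cite{Serre87} for unramifiedness away from $p$ and finite flatness at $\pp \mid p$, whereas you supply those same standard arguments directly (Tate's algorithm with $v_{\qq}(c_4)=0$ away from $S_K$, the Tate-curve criterion using $p\mid v_{\qq}(\Delta)$, and the Brumer--Kramer/Silverman bound $2+6v_{\mathfrak{P}}(2)$ at $\mathfrak{P}\mid 2$). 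Two implicit points you rely on are consistent with the paper's setup: primitivity of $(a,b,c)$ (needed so that $\qq$ divides at most one of $a,b,c$), which is the standing hypothesis in the theorems this lemma serves, and the reading of $\mathcal{N}_{E_{(a,b,c)}}$ as the Serre conductor of $\Bar{\rho}_{E_{(a,b,c)},p}$, whose exponent at each $\mathfrak{P}\in S_K$ is bounded by the curve's conductor exponent there -- exactly the intended interpretation, given the level-lowering step that follows.
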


    \begin{proof}
    The discriminant of $E_{(a,b,c)}$ is equal to $\Delta_{E_{(a,b,c)}} = 16 \cdot (abc)^{2p}$, then $p|v_{\qq}(\Delta_{E_{(a,b,c)}})$. From \cite[Lemma3.3]{freitas2015asymptotic} we obtain that the model of $E_{(a,b,c)}$ is minimal, semistable and that its conductor is the one expressed in \eqref{conductor}, where $0 \leq r_{\mathfrak{P}} \leq 2+6 v_{\mathfrak{P}}$. The determinant and oddness is a well-known consequence of the theory of the Weil pairing on $E_{(a,b,c)}[p]$. Finally, it follows from \cite{Serre87} that $\Bar{\rho}_{E_{(a,b,c)},p}$ is unramified at $\qq$ if $\qq \nmid p$, and is finite flat at every $\pp$ dividing $p$. 
    \end{proof}

    \subsection{Modularity}
    
    Let $E$ be an elliptic curve defined over a totally real field $K$. We say that $E$ is modular if there exists an isomorphism of Galois representations
    \begin{align}
    \rho_{E,p} \sim \rho_{\ff, \Bar{\omega}},
    \end{align}
    where $\ff$ is a Hilbert cuspidal Hecke eigenform over $K$ of parallel weight $2$ with coefficient field $K_f$ and $\Bar{\omega}$ is a prime in $K_f$ above $p$.

    In (\cite[Theorem 5]{freitas2015elliptic}), Freitas, Le Hung, and Siksek proved the following result about the modularity of elliptic curves.
    
    \begin{thm}
    	Let $K$ be a totally real field. All but finitely many $\overline{K}$-isomorphism classes of elliptic curves over $K$ are modular.
    \end{thm}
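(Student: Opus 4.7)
The plan is to adapt Wiles's strategy for elliptic curves over $\QQ$, combining modularity lifting theorems over totally real fields with a $3$-$5$-$7$ switching argument, so that only finitely many $\overline{K}$-isomorphism classes of elliptic curves escape modularity.

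First I would try to establish residual modularity at one of the primes $p \in \{3,5,7\}$. For $p=3$, whenever $\Bar{\rho}_{E,3}$ is absolutely irreducible it is modular by the Langlands--Tunnell theorem, because its image sits in the solvable group $\GL_2(\FF_3)$; one then applies a modularity lifting theorem over totally real fields (of the type due to Kisin, Skinner--Wiles, Gee, Barnet-Lamb--Gee--Geraghty, etc.) to conclude that $E$ itself is modular, provided $\Bar{\rho}_{E,3}$ restricted to $\Gal(\overline{K}/K(\zeta_3))$ has sufficiently large image. When this hypothesis fails, I would run a Wiles-type switching argument at $p=5$ or $p=7$: if $\Bar{\rho}_{E,p}$ is absolutely irreducible one produces an auxiliary elliptic curve $E'/K$ with $\Bar{\rho}_{E',p}\simeq \Bar{\rho}_{E,p}$ but with $\Bar{\rho}_{E',3}$ irreducible of large image; then $E'$ is modular by the previous step, and hence so is $\Bar{\rho}_{E,p}$, whence $E$ by another application of the modularity lifting theorem.

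Second, the elliptic curves for which all three of the switches at $p=3,5,7$ fail are precisely those whose residual representations simultaneously have small image (reducible, dihedral, or otherwise exceptional) at each of these primes. Such curves are cut out by $K$-rational points on a finite explicit list of modular curves, such as $X_0(15)$, $X_0(21)$, $X_0(35)$ together with certain twists and fibre products parameterizing pairs of exceptional residual images. For each such modular curve $X$, either $X$ has genus $\ge 2$ (so $X(K)$ is finite by Faltings), or its non-cuspidal $K$-points correspond to CM elliptic curves (modular by classical results of Shimura) or to explicitly classifiable exceptional $j$-invariants.

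The main obstacle, and the technical heart of the argument, is precisely the last step: controlling the $K$-rational points of these modular curves over a general totally real field $K$. Over $\QQ$ this is classical; over arbitrary totally real $K$ one needs a careful case analysis together with results on points on modular curves over number fields, which is exactly where the bulk of the work in \cite{freitas2015elliptic} is concentrated. The outcome is that for each fixed totally real $K$ the set of $j$-invariants of possibly non-modular elliptic curves over $K$ is finite, yielding the stated theorem.
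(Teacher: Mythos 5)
This theorem is not proved in the paper at all: it is imported verbatim as \cite[Theorem 5]{freitas2015elliptic} of Freitas--Le Hung--Siksek, so the paper's ``proof'' is simply that citation. Your sketch is essentially a faithful outline of how the cited work actually proves it (Langlands--Tunnell at $3$ plus modularity lifting theorems over totally real fields, a $3$--$5$--$7$ switching argument with auxiliary curves, and finiteness via Faltings of the $K$-points of the finitely many modular curves parameterizing curves whose mod $3$, $5$, $7$ images are simultaneously small), so you take the same route as the source; the caveat is that the genuinely hard content --- verifying the hypotheses of the lifting theorems and the genus computations and point analysis for those modular curves --- remains black-boxed in your outline, exactly as it is in the paper under review.
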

    
    In particular, it implies the following corollary.
    
    \begin{coro}
    \label{modular}
    	Let $K$ be a totally real field. There exist some constant $A_K$ depending only on $K$, such that for any non-trivial solution $(a,b,c)$ of the Fermat equation (\ref{fermat:eq}) with prime exponent $p>A_K$, the Frey curve $E_{(a,b,c)}$ given by \eqref{frey-curve} is modular.
    \end{coro}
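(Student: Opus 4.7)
The plan is to reduce to the preceding theorem by showing that, for $p$ sufficiently large, the $j$-invariant of the Frey curve cannot lie in the finite set $\mathcal{J}\subset K$ of $j$-invariants of non-modular elliptic curves over $K$ provided by that theorem (finiteness of $\bar K$-isomorphism classes translates into finiteness of $j$-invariants, since twists of modular curves are modular). The key observation is that $E_{(a,b,c)}$ has all three $2$-torsion points $K$-rational, with $x$-coordinates $0$, $a^p$, $-b^p$, and so admits a Legendre $\lambda$-invariant
\[
\lambda \;=\; \frac{-b^p-0}{a^p-0} \;=\; (-b/a)^{p}\;\in\;K^{\times}
\]
(using that $p$ is odd), which in particular is a $p$-th power in $K^{\times}$. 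For each fixed $j_0\in\mathcal{J}$, the condition $j(E_{(a,b,c)})=j_0$ amounts to $\lambda$ being a root of the degree-$6$ polynomial
\[
F_{j_0}(X)\;:=\;256\,(1-X+X^{2})^{3}-j_0\,X^{2}(1-X)^{2}\in K[X],
\]
so $\lambda$ is forced into a finite subset of $K$. It therefore suffices to show that for each such root $\lambda_0\in K^{\times}$, the condition $\lambda_0\in(K^{\times})^{p}$ is incompatible with the existence of a non-trivial Fermat solution once $p$ is large.

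A standard argument on $p$-th powers handles most of this. If $\lambda_0\in(K^{\times})^{p}$, then the principal ideal $(\lambda_0)$ is the $p$-th power of a fractional ideal, so $p$ divides every $\mathfrak{p}$-adic valuation of $\lambda_0$; once $p$ exceeds the finite maximum of the absolute values of the non-zero valuations of $\lambda_0$, this forces $\lambda_0\in\mathcal{O}_K^{\times}$. Writing $\mathcal{O}_K^{\times}=\{\pm 1\}\times\mathbb{Z}^{r_1-1}$ (since $K$ is totally real) and expressing $\lambda_0$ in coordinates on a system of fundamental units, $\lambda_0\in(\mathcal{O}_K^{\times})^{p}$ forces $p$ to divide each of the (bounded) exponents, whence $\lambda_0\in\{\pm 1\}$. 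A direct evaluation gives $F_{j_0}(1)=256\neq 0$, so $\lambda_0=1$ never arises; and $F_{j_0}(-1)=6912-4j_0$, so $\lambda_0=-1$ arises only when $j_0=1728$.

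The remaining case $\lambda_0=-1$ (with $j_0=1728$) needs an auxiliary step: since $K$ is totally real, the only $p$-th root of $-1$ in $K$ (for $p$ odd) is $-1$ itself, so $(-b/a)^{p}=-1$ forces $b=a$. Substituting into $a^p+b^p=c^p$ yields $2a^{p}=c^{p}$, i.e.\ $(c/a)^{p}=2$; but $v_{\mathfrak{p}}(2)=e(\mathfrak{p}\mid 2)>0$ at each $\mathfrak{p}\mid 2$, so $2\in(K^{\times})^{p}$ is impossible as soon as $p$ exceeds the maximum ramification index of $2$ in $K$. Taking $A_K$ to be the largest of all the bounds obtained above completes the argument. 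The main subtlety is precisely this exceptional case $j_0=1728$, where the generic $p$-th-power obstruction is vacuous at $\lambda_0=-1$ and must be replaced by the ramification-at-$2$ argument; everything else is routine bookkeeping over the finitely many $(\lambda_0,j_0)$.
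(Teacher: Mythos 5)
Your proof is correct, and it is essentially the argument behind the paper's one-line citation: the paper simply quotes \cite[Corollary 2.1]{freitas2015asymptotic}, whose proof likewise puts $\lambda=-b^p/a^p$ in a finite set determined by the finitely many non-modular $j$-invariants and then rules out the finitely many candidates for large $p$ using that $\lambda$ is a $p$-th power. The only cosmetic difference is in the bookkeeping: the cited proof bounds Weil heights (so that $-b/a$ is forced to be a root of unity) and disposes of the exceptional value $\lambda=-1$ by noting that $j=1728$ curves have CM and are modular, whereas you use $\mathfrak{p}$-adic valuations together with the unit lattice and then eliminate $\lambda=-1$ via $(c/a)^p=2$ and the bounded ramification of $2$; both routes are valid and yield a constant $A_K$ depending only on $K$.
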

    
    \begin{proof}
    	See \cite[Corollary 2.1]{freitas2015asymptotic}.
    \end{proof}


    Therefore, if $p>A_K$, there exists a Hilbert cuspidal eigenform $\frak{f}$ over $K$ of parallel weight $2$ and level $\mathcal{N}_{E_{(a,b,c)}}$ such that
    \begin{align}
    \rho_{E,p} \sim \rho_{\ff, \Bar{\omega}},
    \end{align}

    \subsection{Irreducibility}

    To obtain irreducibility of the residual Galois representation $\bar{\rho}_{E_{(a,b,c)},p}$ we use the following result 

    \begin{thm} (\cite[Theorem 6]{anni2016modular})
    \label{irreducible}
    Let $K$ be a totally real field. There is an effectively computable constant $B_K$ such that for a prime $\ell > B_K$, and for an elliptic curve $E/K$ semistable at all $\lambda | \ell$, the mod $\ell$ representation $\Bar{\rho}_{E,\ell}:  G_K \rightarrow \GL_2(\FF_{\ell})$ is irreducible.
    \end{thm}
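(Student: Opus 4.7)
The plan is to suppose for contradiction that $\bar{\rho}_{E,\ell}$ is reducible and then derive an upper bound on $\ell$ in terms of invariants of $K$; the constant $B_K$ is the resulting bound. Reducibility gives a $G_K$-stable line in $E[\ell]$, i.e.\ a character $\chi : G_K \to \FF_\ell^\times$ (the \emph{isogeny character}), with the quotient representation equal to $\chic \chi^{-1}$. The strategy is to show that $\chi$ is constrained to lie in a finite, explicitly bounded set of characters, and then to derive a contradiction from the Hasse bound on traces of Frobenius of $E$.

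First I would control the ramification of $\chi$ away from $\ell$. At every prime $v \nmid \ell$, Grothendieck's semistable reduction theorem for $E[\ell]$ implies that $I_v$ acts through a finite quotient whose order divides $24$, so $\chi^{12}$ is unramified at $v$. At a prime $\lambda \mid \ell$, the semistability hypothesis on $E$ forces one of three subcases. In the supersingular good-reduction case, $I_\lambda$ acts on $E[\ell]$ irreducibly through the fundamental characters of level $2$, which contradicts the existence of a global stable line and can be excluded. In the ordinary good-reduction and multiplicative cases, the canonical connected-\'etale filtration of the finite flat group scheme $E[\ell]$ over $\mathcal{O}_{K_\lambda}$ shows that $\chi|_{I_\lambda}$ is either trivial or equal to $\chic|_{I_\lambda}$.

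Combining these analyses, for each choice of ``signature'' of $\chi$ at the primes $\lambda \mid \ell$, the character $\chi^{12}$ (possibly after twisting by a $12$th power of $\chic$) is unramified everywhere. Class field theory then forces this twist to factor through a ray class group of $K$ of conductor supported only on primes above $\ell$, and hence through a finite group whose order is bounded effectively in terms of $h_K$, the local unit groups $\mathcal{O}_{K_\lambda}^\times$ modulo $\ell$th powers, and $\#\{\lambda \mid \ell\}$. I would then pick a fixed auxiliary prime $\qq \nmid \ell$ of small norm $q$ and use
\[
a_\qq(E) \equiv \chi(\Frob_\qq) + q\,\chi(\Frob_\qq)^{-1} \pmod{\ell},
\]
noting that $\chi(\Frob_\qq)$ ranges over a finite set independent of $\ell$, while $|a_\qq(E)| \leq 2\sqrt{q}$ by Hasse. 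For $\ell$ larger than an effectively computable $B_K$, these congruences cannot simultaneously hold, yielding the contradiction.

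The hard step is the fine local analysis at primes $\lambda \mid \ell$ under the sole hypothesis of semistability: one must invoke Raynaud's classification of finite flat group schemes of order $\ell$ over $\mathcal{O}_{K_\lambda}$ (or Fontaine--Laffaille / Breuil--Kisin theory when the ramification of $K/\QQ$ at $\ell$ is large) to pin down that only the trivial and cyclotomic characters occur on a $G_{K_\lambda}$-stable line of $E[\ell]$. A secondary difficulty is that the bound on the order of $\chi$ depends on the local unit groups at primes above $\ell$, which vary with $\ell$; one resolves this by uniformly bounding the $\ell$-torsion contribution through the residue field size and the degree $[K_\lambda : \QQ_\ell]$.
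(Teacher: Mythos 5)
The paper does not actually prove this statement: it is imported verbatim from Anni--Siksek (their Theorem~6), which in turn, as the remark following it notes, is Freitas--Siksek's irreducibility criterion (Theorem~2 of \emph{Criteria for irreducibility of mod $p$ representations of Frey curves}) applied to the Galois closure of $K$. So your proposal should be measured against that cited proof, whose broad strategy you have correctly reproduced: pass to the isogeny character $\chi$, show $\chi^{12}$ is unramified away from $\ell$, use semistability to pin down $\chi|_{I_\lambda}$ as trivial or cyclotomic at each $\lambda\mid\ell$, and finish with class field theory plus an auxiliary prime and the Hasse bound. (Two small local slips: in the multiplicative case $E[\ell]$ is not finite flat over $\mathcal{O}_{K_\lambda}$, so the connected--\'etale argument must be replaced by the Tate parametrization; and the supersingular exclusion needs $e(\lambda\mid\ell)<\ell-1$, which is fine only because $\ell>B_K$.)

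The genuine gap is the \emph{mixed signature} case, which is exactly the hard point of the cited proof and which your sketch papers over. If $\chi|_{I_\lambda}$ is trivial at some primes $\lambda\mid\ell$ and cyclotomic at others, no single global twist by a power of $\chic$ can make $\chi^{12}$ unramified everywhere: the twist changes the restriction at \emph{all} primes above $\ell$ simultaneously, so your ``possibly after twisting'' step only works when the signature is constant. Your proposed repair --- letting the twisted character factor through a ray class group with modulus supported above $\ell$ and ``uniformly bounding the $\ell$-torsion contribution'' --- fails, because the obstruction is not $\ell$-torsion: the ray class group of modulus $\prod_{\lambda\mid\ell}\lambda$ contains $(\mathcal{O}_K/\lambda)^{\times}$-factors of order $\norm(\lambda)-1$, which grow with $\ell$, so the order of $\chi$ cannot be bounded independently of $\ell$ this way. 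In Freitas--Siksek (following Kraus, David and Momose) the mixed case is handled differently: one evaluates the character on principal ideals generated by (totally positive) units, deducing that $\ell$ divides the norm of an algebraic number of the shape $\prod_{\sigma}\sigma(\varepsilon)^{12 s_\sigma}-1$; either some such number is nonzero, bounding $\ell$ effectively, or these multiplicative relations force the signature $(s_\sigma)$ to be constant --- and it is precisely to run this unit argument that one needs $K$ Galois, whence the passage to the Galois closure in the statement you are proving. Without this ingredient your argument establishes the result only in the constant-signature cases, so as written it does not prove the theorem.
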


\begin{remark}
As it is pointed out in \cite{anni2016modular} this result is a restatement of \cite[Theorem 2]{freitas2015criteria}, replacing $K$ with its Galois closure.
\end{remark}
    
Then, since $E_{(a,b,c)}$ is semistable away from $2$ we are under the hypothesis of a lowering the level result. 

\subsection{Level lowering} 

We use the following result from Freitas and Siksek.

\begin{thm} (\cite[Theorem 7]{freitas2015asymptotic})
\label{lowering}
Let $K$ be a totally real field and $E/K$ an elliptic curve of conductor $\mathfrak{N}_E$. Let $p$ be a rational prime. For a prime ideal $\qq$ of $K$, denote by $\Delta_{\qq}$ the discriminant of a local minimal model of $E$ at $\qq$. Let

    \begin{align}
    \mathfrak{M}_p = \prod_{\substack{\qq || \mathfrak{N}_E \\
    p|v_{\qq}(\Delta_{\qq})}} \qq, \ \ \ \mathfrak{N}_p = \frac{\mathfrak{N}_E}{\mathfrak{M}_p}.
    \end{align}
    
Suppose the following conditions hold: 
\begin{itemize}
\item $p\geq 5$, the ramification index $e(\pp|p)<p-1$, and $\QQ(\zeta_p)^+ \notin K$,
\item $E$ is modular,
\item $\Bar{\rho}_{E,p}$ is irreducible,
\item $E$ is semistable at all $\qq|p$,
\item $p|v_{\pp}(\Delta_{\qq})$ for all $\pp|p$.
\end{itemize}
Then, there is a Hilbert eigenform $\ff$ of parallel weight $2$ that is new at level $\mathfrak{N}_p$ and some prime $\Bar{\omega}$ of $K_f$ above $p$ such that $\Bar{\rho}_{E,p} \sim \Bar{\rho}_{f,\Bar{\omega}}$.
\end{thm}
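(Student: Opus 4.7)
The statement is a level-lowering theorem for Hilbert modular forms, generalizing the classical Mazur and Ribet theorems to the totally real setting. My plan is to follow the standard level-lowering template adapted to this context.

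The first step is to use the modularity hypothesis to produce a Hilbert eigenform $\ff_0$ of parallel weight $2$ and level $\mathfrak{N}_E$ with $\bar\rho_{E,p} \sim \bar\rho_{\ff_0,\bar\omega_0}$ for some prime $\bar\omega_0$ of the coefficient field of $\ff_0$ above $p$. The goal is then to strip off, one by one, the primes dividing $\mathfrak{M}_p$ from the level. The second step isolates the input needed for level lowering: for each $\qq \mid \mathfrak{M}_p$, the condition $\qq \| \mathfrak{N}_E$ forces $E$ to have multiplicative reduction at $\qq$, and Tate uniformization expresses $\bar\rho_{E,p}|_{I_\qq}$ in terms of $\zeta_p$ and a $p$-th root of the Tate parameter, whose valuation equals $v_\qq(\Delta_\qq)$. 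Hence the hypothesis $p \mid v_\qq(\Delta_\qq)$ translates into $\bar\rho_{E,p}$ being unramified at $\qq$, which is precisely the local condition required to apply Mazur's principle.

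The third and main step applies Mazur's principle at each such $\qq$ to descend from level $\mathfrak{N}_E$ to $\mathfrak{N}_E/\qq$; iterating over all $\qq \mid \mathfrak{M}_p$ yields the desired eigenform $\ff$ of level $\mathfrak{N}_p$. I expect this step to be the main technical obstacle. The generalization of Mazur's principle to totally real fields, due to Jarvis and Rajaei, runs through the Jacquet--Langlands correspondence, realizing the relevant eigenforms in the cohomology of Shimura curves attached to suitable quaternion algebras, and requires an Ihara-type argument together with control of the component groups of the Néron models of the associated Jacobians. The auxiliary hypotheses $p \geq 5$, $e(\pp|p) < p-1$, and $\QQ(\zeta_p)^+ \not\subset K$ are exactly what is needed to preserve finite-flatness and irreducibility of $\bar\rho_{E,p}$ at primes above $p$ throughout the reductions and to rule out exceptional residual twists that would obstruct the congruence argument.
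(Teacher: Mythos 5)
You give no proof that could diverge from the paper's, because the paper itself contains none: the statement is imported verbatim from \cite[Theorem 7]{freitas2015asymptotic}, and your outline follows essentially the same route as that source, namely Tate uniformization turning $p\mid v_{\qq}(\Delta_{\qq})$ into unramifiedness (resp.\ finite flatness at $\qq\mid p$) of $\Bar{\rho}_{E,p}$, followed by level lowering for Hilbert modular forms proved via Jacquet--Langlands, Shimura curves and their component groups. The only caveat is that Mazur's principle by itself only strips primes $\qq$ with $N(\qq)\not\equiv 1\pmod{p}$ and says nothing about primes above $p$: the remaining cases need Rajaei's Ribet-type theorem and Fujiwara's results, and the hypotheses $e(\pp|p)<p-1$ and $\QQ(\zeta_p)^+\not\subset K$ enter there to keep $\Bar{\rho}_{E,p}$ finite flat in the Fontaine--Laffaille range and to make $\Bar{\rho}_{E,p}|_{G_{K(\zeta_p)}}$ absolutely irreducible, rather than to ``rule out exceptional residual twists''.
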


Let us fix $K$ a totally real field. Let $D_K$ a bound such that if $p>D_K$, then we can choose $\pp|p$ with $e(\pp|p)<p-1$, and $\QQ(\zeta_p)^+ \notin K$. If $p>A_K$ then $E_{(a,b,c)}$ is modular by Corollary (\ref{modular}). If $p>B_K$ then $\Bar{\rho}_{E_{(a,b,c)},p}$ is irreducible by Theorem (\ref{irreducible}). Finally, since $E_{(a,b,c)}$ is semistable away from $2$, we conclude that $E_{(a,b,c)}$ satisfies all the hypothesis of Theorem (\ref{lowering}). Therefore there exists a Hilbert eigenform $\ff$ of parallel weight $2$ that is new at level $\prod_{\mathfrak{P} \in S_K} \mathfrak{P}^{r_{\beta}}$ such that
    \begin{align*}
    \bar{\rho}_{E,p} \sim \bar{\rho}_{\ff, \Bar{\omega}},
    \end{align*}

   Fix  a prime ideal $\pp_3$ in $K$ dividing $3$. Since we are in the case that $3|abc$, then $\pp_3|\mathfrak{N}_{E_{(a,b,c)}}$. Let $f= f(\pp_3|3)$. Therefore, since we are in a level lowering case at $\pp_3$ we have that
   \begin{align}
   a_{\pp_3}(\ff) \equiv \pm (3^f + 1) \pmod{p}
   \end{align}
   since this is the necessary condition for Steinberg level raising.

   On the other hand, because of the Hasse-Weil bound, we have that

   \begin{align}
   \label{ineq}
   |a_{\pp_3}(\ff)| \leq 2 \cdot 3^{f/2} .
   \end{align}
   And the same bound applies to all the Galois conjugates of $a_{\pp_3}(\ff)$. In particular, $a_{\pp_3}(\ff) \neq \pm (3^f + 1)$. Indeed, if $a_{\pp_3}(\ff) = \pm (3^f + 1)$ then $a_{\pp_3}(\ff)^2 = (3^f +1)^2 = (3^{f})^2 + 2 \cdot 3^f +1 > 4 \cdot 3^f +1$ which contradicts \eqref{ineq}. From this, we obtain a bound $F_K$ for $p$, since it has to divide the norm of the non-zero algebraic integer $a_{\pp_3}(\ff) \pm (3^f + 1)$. Observe that the bound depends only on $K$ because the form $\ff$ belongs to the space of Hilbert cuspforms over $K$ of parallel weight $2$ and level dividing a power of $2$, depending only on $K$. In particular, the degree of the algebraic integer $a_{\pp_3}(\ff)$ is bounded by the dimension of this space of forms, which depends only on $K$.\\

   Then, we arrive at a contradiction if $p> F_K$.

    

       \begin{proof}[Proof of Theorem \ref{3midabc}]

      It follows from the previous analysis and taking      \begin{equation*}
    C_K = \max\{D_K, A_K, B_K, F_K \}.
    \end{equation*}
\end{proof}

    \section{The case $3 \nmid xyz$} \label{3nmidabc}
    
    \subsection{Introduction to Pomey proof} \label{mimic_pomey}

    We want to analyze now what happens when $3\nmid xyz$. Massouti\'e \cite{massoutie1931dernier} and Pomey \cite{pomey1931nouvelles} proved in $1931$ the following result:
    
    \begin{thm}
    	\label{thm:pomey}
    If $p$ is an odd prime such that $p \equiv 2 \pmod 3$ and $x,y,z$ are non-zero integers such that $x^p+y^p+z^p=0$, then $3$ divides $x,y,$ or $z$.
    \end{thm}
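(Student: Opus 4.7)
The plan is to follow the original Pomey--Massouti\'e strategy, combining an elementary mod $3$ reduction with the classical lemma on primes of the form $a^2+3b^2$ quoted in the introduction. I would first assume for contradiction that a nontrivial coprime solution $(x,y,z)$ exists with $3 \nmid xyz$. Since $p$ is odd, Fermat's little theorem gives $a^p \equiv a \pmod 3$ for every $a$ coprime to $3$, so the equation forces $x+y+z \equiv 0 \pmod 3$; combined with each of $x,y,z$ being $\pm 1 \pmod 3$, this pins down $x \equiv y \equiv z \pmod 3$. Up to the simultaneous sign change $(x,y,z) \mapsto (-x,-y,-z)$, we may normalize $x \equiv y \equiv z \equiv 1 \pmod 3$.

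Next I would exploit the factorization $x^p+y^p = (x+y)\cdot B$, where $B = x^{p-1}-x^{p-2}y+\cdots+y^{p-1}$. Substituting $y \equiv -x$ gives $B \equiv p x^{p-1} \pmod{x+y}$, so $\gcd(x+y,B)$ divides $p$. Unique factorization applied to $(x+y)B = (-z)^p$ then shows that, away from a possible $p$-adic contribution, both $x+y$ and $B$ are perfect $p$-th powers (up to sign); the same is true of the two analogous factorizations coming from the pairs $(x,z)$ and $(y,z)$. This produces a symmetric system of $p$-th power relations among the sums $x+y$, $x+z$, $y+z$, which, together with $3 \mid x+y+z$, is the setting in which the arithmetic of $\mathbb{Z}[\omega]$ will be brought to bear.

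The decisive ingredient is the lemma on $a^2+3b^2$. For $p=3$ the factor $B=x^2-xy+y^2$ is immediately of that form, since $4B=(2x-y)^2+3y^2$, and the prime constraint from the lemma (combined with $3\nmid xyz$) forces an incompatibility. For general $p\equiv 2\pmod 3$, I would argue that $B$ admits a similar $a^2+3b^2$ shape after a descent through the subfield $\mathbb{Q}(\sqrt{-3})\subset \mathbb{Q}(\zeta_p,\omega)$: any rational prime $q\neq p$ dividing $B$ already satisfies $q \equiv 1 \pmod p$ (because $-x/y$ has order $p$ mod $q$), and the additional output of the lemma is that $q$ must also be of the form $u^2+3v^2$, hence $q \equiv 1 \pmod 3$. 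The hypothesis $p \equiv 2 \pmod 3$, which is precisely the condition that $3$ is inert in $\mathbb{Q}(\zeta_p)$, is what makes the extraction of the $u^2+3v^2$ form work and forces the congruences to conflict with $3\mid x+y+z$.

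The main obstacle is precisely the descent step that converts the higher-degree factor $B$ into a quantity to which the $a^2+3b^2$ lemma applies. For $p=3$ the identity $4B = (2x-y)^2+3y^2$ makes this automatic, but for $p\geq 5$ one has to pass through $\mathbb{Z}[\omega]$, track how $B$ decomposes there, and verify that the norm map produces the required form without introducing spurious prime factors. This is exactly the obstruction flagged in the introduction as the one the authors overcome (by introducing the class-number hypothesis on $K(\sqrt{-3})$) when passing to general number fields; in the classical $\mathbb{Z}$-case, the triviality of the relevant class group is what makes Pomey's original argument work directly.
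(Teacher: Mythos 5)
Your opening reduction modulo $3$ (each of $x,y,z$ is $\pm 1 \bmod 3$, and the equation forces $x \equiv y \equiv z \pmod 3$) is exactly how Pomey's argument, as reproduced in subsection \ref{mimic_pomey} of the paper, begins. After that, however, your proposal leaves the decisive step unproved, and you acknowledge this yourself. The Barlow-type factorization $x^p+y^p=(x+y)B$ with $B=x^{p-1}-x^{p-2}y+\cdots+y^{p-1}$, and the observation that a prime $q\nmid p(x+y)$ dividing $B$ satisfies $q\equiv 1 \pmod p$, never interact with the prime $3$: the congruence $q \equiv 1 \pmod p$ carries no information modulo $3$, and the assertion that such a $q$ ``must also be of the form $u^2+3v^2$'' is precisely what is not justified, because the quoted lemma only applies to primes dividing an integer $a^2+3b^2$ with $a,b$ coprime, and you never exhibit such an integer divisible by $B$ (indeed for $p\ge 5$ there is no direct analogue of the $p=3$ identity $4B=(2x-y)^2+3y^2$). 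Consequently no concrete contradiction is reached; ``forces the congruences to conflict with $3\mid x+y+z$'' is not an argument.

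The missing idea, which is how Pomey closes the proof and how the paper organizes its generalization, is to work with a different cofactor. Writing the solution as $x_1^p+x_2^p+x_3^p=0$ and using $x_i^p=-(x_j^p+x_k^p)$, one has the identity
\begin{equation*}
4\left(x_i^{2p}-x_j^p x_k^p\right) \;=\; 3\left(x_j^p+x_k^p\right)^2+\left(x_j^p-x_k^p\right)^2,
\end{equation*}
so the positive integer $x_i^{2p}-x_j^p x_k^p$ is (up to the factor $4$) visibly of the form $a^2+3b^2$, and by the lemma every positive divisor of it is again of that form, hence $\equiv 0$ or $1 \pmod 3$. On the other hand, since $x_1\equiv x_2\equiv x_3 \pmod 3$ one has $3 \mid x_i^2-x_jx_k$, and the quotient $P=(x_i^{2p}-x_j^px_k^p)/(x_i^2-x_jx_k)$ satisfies the polynomial identity $P=p\,(x_i^2x_jx_k)^{(p-1)/2}+m\,(x_i^2-x_jx_k)^2$, which yields $P\equiv p\equiv 2 \pmod 3$ --- the desired contradiction. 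Your quantity $B=(x^p+y^p)/(x+y)$ is simply not the right object to feed into the $a^2+3b^2$ lemma, which is why your proposed descent through $\mathbb{Z}[\omega]$ stalls; note also that the class-number hypothesis of the paper is needed only to replace the lemma over general totally real fields, not to repair this step over $\mathbb{Q}$, where the lemma holds unconditionally.
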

    
    Observe that this result, in particular, proves that there is no non-trivial primitive solution to the equation $x^p + y^p +z^p =0$ regarding $3 \nmid xyz$ and $p$ odd such that $p \equiv 2 \pmod3$, which is the result we are looking for over $K$.
    \begin{itemize}
    	\item \textbf{Question: Does a similar result hold in other fields?} 
    \end{itemize}
    
    
    We mimic Pomey's proof of Theorem \ref{thm:pomey} to see which hypothesis we need to add to $K$ to obtain this result for $K$.
    
    Let $(x_1,x_2,x_3)$ a solution for the equation \eqref{fermat:eq}. Suppose $3 \nmid x_1 x_2 x_3$, then $x_1,x_2,x_3$ are of the form $x_i = \varepsilon_i + 3\lambda_i$ with $\varepsilon_i = \pm 1$. Here, the congruence is in a localized field $\mathcal{O}_K/\pp_3$,  where $\pp_3$ is a prime of $K$ above $3$. Since we want $\mathcal{O}_K/\pp_3$ to be equal to $\FF_3$, we need $T_3 \neq \emptyset$, and this is the reason for the first item of Assumption~\ref{assumption}. Then, we assume $T_3 \neq \emptyset$ and analyze the solutions in $\mathcal{O}_K/\pp_3 = \FF_3$. Since $x_1^p + x_2^p+x_3^p=0$, we have that $\varepsilon_1=\varepsilon_2=\varepsilon_3$. Therefore $3|x_i^2-x_jx_k$. Let define 
    
    \begin{equation}
    	\label{eq:1}
    	P := \frac{x_i^{2p}-x_j^p x_k^p}{x_i^2 - x_j x_k}
    \end{equation}
    
    Observe that
    
    \begin{align*}
    	P = p(x_i^2 x_j x_k)^{\frac{p-1}{2}} + m (x_i^2 - x_j x_k)^2
    \end{align*}
    where
    \begin{align*}
    m=\sum_{r=0}^{p-1} c_r x_i^{2p-4-2r}x_j^r x_k^r, 	\ \ \ c_r = -c_{p-2-r}=r+1 \ \text{for} \ 0\leq r \leq \frac{p-3}{2} 
    \end{align*}

    Since $3|x_i^2-x_j x_k$ and $\varepsilon_j=\varepsilon_k$, 
    \begin{equation}
    	\label{eq:P}
    	P \equiv p \pmod3.
    \end{equation}
     
     On the other hand
    
    \begin{align*}
    	4 (x_i^{2p}-x_j^p x_k^p) = 3(x_j^p+x_k^p)^2 + (x_j^p-x_k^p)^2.
    \end{align*}
    
    Observe that the right hand side is of the form $a^2+3b^2$, with $a,b \in \mathcal{O}_{K(\sqrt{-3})}$. 
    
    In Pomey's proof, he concluded that any positive integer dividing $(x_i^{2p}-x_j^p x_k^p)$, and therefore $P$, will be of the form $a^2+3b^2$. This gives a contradiction since $p\equiv 2 \pmod 3$.

   \subsection{Generalization of Pomey result} \label{general_of_pomey}
   
   Let $K$ be a totally real field and let $x,y \in \mathcal{O}_K$ be coprime integers. Suppose now that $d$ is totally positive and $d| N \coloneqq x^2 + 3y^2$. Under which circumstances can we conclude that $d$ is of the form $a^2+3b^2$?
   
   Observe that $a^2+3b^2 = N(a+b\sqrt{-3})$. Then, $d$ is of the form $a^2 + 3b^2$ if and only if $d = N(\beta)$ for some $\beta \in \mathcal{O}_{K(\sqrt{-3})}$.
   
   Let us denote $\alpha = x+y\sqrt{-3}$ and let
   \begin{align*}
   \alpha \cdot \mathcal{O}_{K(\sqrt{-3})} = \prod_{i=1}^{r} \pp_i^{e_i}
   \end{align*}
   be the prime ideal factorization of $\alpha$ in $\mathcal{O}_{K(\sqrt{-3})}$.

   Since $\pp_i$ are split primes, then $N_{K(\sqrt{-3})/K}(\pp_i)$ are prime ideals. Also, since $x$ and $y$ are coprime integers, the prime ideals $N(\pp_i)$ are coprime to each other. Therefore
   
   \begin{align*}
   	N \cdot \mathcal{O}_K = \prod_{i=1}^{r} N(\pp_i)^{e_i}
   \end{align*}
   is the prime ideal decomposition of $N$ in $K$. Since $d|N$, let
      \begin{align*}
   	d \cdot \mathcal{O}_K = \prod_{i=1}^{r} N(\pp_i)^{e_i'}
   \end{align*}
   be the prime decomposition of $d$ in $K$ with $e_i'$ integers such that $0\leq e_i' \leq e_i$.
   
   Let $t \coloneqq h(K(\sqrt{-3}))$. Therefore, $\pp_i^t = (\alpha_i)$ for some $\alpha_i \in \mathcal{O}_{K(\sqrt{-3})}$. Thus,
   \begin{align*}
   	(d \cdot \mathcal{O}_K)^t &= \prod_{i=1}^{r}  N(\pp_i^t)^{e_i'}\\
   	                          &= \prod_{i=1}^{r} N((\alpha_i^{e_i'}))\\
   	                          &= N((\prod_{i=1}^{r} \alpha_i^{e_i})).
   \end{align*}
   
   Therefore, there exist a unit $u \in \mathcal{O}_K^*$ such that 
   
   \begin{align*}
   	d^t \cdot u = N(\beta)
   \end{align*}
   for some $\beta \in \mathcal{O}_{K(\sqrt{-3})}$
   
   Therefore, $d^t$ is of the form $a^2+3b^2$ if and only if $u = N(\gamma)$ for some $\gamma \in \mathcal{O}_{K(\sqrt{-3})}$.
   
   In particular, if $u=v^2$ for some $v$, then $d^t$ is of that form.
   
   Let us return to the Pomey proof we discussed in the previous subsection. We had $P$ defined in \eqref{eq:1}, which is totally positive. Also, $P$ divides a number of the form $x^2+3y^2$. Then, by the analysis we made, 
   
   \begin{equation}
   	\label{eq:unit}
   	   P^t = u \cdot N(\beta) 
   \end{equation}
   for some $\beta \in \mathcal{O}_{K(\sqrt{-3})}$. Suppose for a moment that $u = N(\gamma)$. Therefore, $P^t$ is of the form $a^2+3b^2$. In particular
   \begin{align*}
   	P^t \equiv \square\pmod 3.
   \end{align*}
   Therefore, using \eqref{eq:P}, we have that 
      \begin{align*}
   	p^t \equiv \square\pmod 3
   \end{align*}
   which is a contradiction if $t$ is odd because $p \equiv 2 \pmod 3$.

  In this last part, we use the condition (3) of Assumption~\ref{assumption}.
   
   \begin{remark}
   	Since $P$ and $N(\beta)$ are totally positive, the unit $u$ will always be totally positive.
   \end{remark}
   
   What we want to see now is, under which hypotheses, the unit $u$ defined in \eqref{eq:unit}, will be a norm of some element in $\mathcal{O}_{K(\sqrt{-3})}$.   
   Let us analyze the following theorem, which will give us information about when this unit $u$ will be a square, and therefore the norm of some integer.
   
   \begin{lemma} 
   	\label{thm:1}
   	Let $K$ be a totally real number field with an odd narrow class number. Then a unit $u$ is a square if and only if it is totally positive.
   \end{lemma}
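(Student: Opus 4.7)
The forward implication is immediate: at every real embedding $\sigma$, a square $v^{2}$ satisfies $\sigma(v^{2}) = \sigma(v)^{2} > 0$, non-zero because $v$ is a unit. The substance is the converse.

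My plan is a counting argument based on the sign map
$$\mathrm{sgn}\colon \mathcal{O}_K^{*} \longrightarrow \{\pm 1\}^{r}, \qquad u \mapsto \bigl(\mathrm{sign}(\sigma_i(u))\bigr)_{i=1}^{r},$$
where $r = [K:\QQ]$. Its kernel is exactly the group of totally positive units, so it suffices to show that the induced homomorphism $\overline{\mathrm{sgn}}\colon \mathcal{O}_K^{*}/(\mathcal{O}_K^{*})^{2} \to \{\pm 1\}^{r}$ is injective. By Dirichlet's unit theorem, and since $\mu_K = \{\pm 1\}$ for totally real $K$, one has $\mathcal{O}_K^{*} \cong \{\pm 1\} \times \ZZ^{r-1}$, so the source of $\overline{\mathrm{sgn}}$ is an $\FF_2$-vector space of dimension $r$, matching the order $2^{r}$ of the target. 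Consequently, injectivity of $\overline{\mathrm{sgn}}$ is equivalent to surjectivity of $\mathrm{sgn}$ itself.

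To obtain that surjectivity I will invoke the standard exact sequence
$$1 \longrightarrow \{\pm 1\}^{r}/\mathrm{sgn}(\mathcal{O}_K^{*}) \longrightarrow \Cl^{+}(K) \longrightarrow \Cl(K) \longrightarrow 1$$
relating the narrow and ordinary class groups. The leftmost term is a quotient of $\{\pm 1\}^{r}$, hence a $2$-group, but it also embeds into $\Cl^{+}(K)$, whose order $h^{+}(K)$ is odd by hypothesis. It is therefore trivial, giving surjectivity of $\mathrm{sgn}$ and completing the argument.

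The only nontrivial input is the construction of the exact sequence above, which is classical (it comes from identifying the cokernel of $\mathrm{sgn}$ with the kernel of $\Cl^{+}(K) \to \Cl(K)$ via weak approximation at the archimedean places); once it is in hand, the rest of the proof is purely a parity-counting argument, and I do not expect any real obstacle beyond invoking it correctly.
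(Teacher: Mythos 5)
Your proposal is correct and is essentially the paper's own argument in different packaging: the exact sequence you invoke, $1 \to \{\pm 1\}^{r}/\mathrm{sgn}(\mathcal{O}_K^{*}) \to \Cl^{+}(K) \to \Cl(K) \to 1$, is the same class-field-theoretic sequence the paper uses (with $K^{\times}/K^{+}\cong\{\pm1\}^{r}$), the oddness of $h^{+}(K)$ kills the same $2$-group, and Dirichlet's unit theorem supplies the same count $[\mathcal{O}_K^{*}:(\mathcal{O}_K^{*})^{2}]=2^{r}$ that forces totally positive units to be squares. No gaps; the sign-map formulation versus the paper's index computation is only a cosmetic difference.
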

   
   \begin{proof}
   	Let $C_K$ be the class group of $K$, $C_K^+$ the narrow class group of $K$, $U_K$ the group of units, $U_K^+$ the totally real units, and $K^+$ the totally positive elements of $K$. Then we have the following exact sequence
    \begin{align*}
    1 \rightarrow U_K/U_K^+ \rightarrow K^{\times}/K^{+} \rightarrow C_K^{+} \rightarrow C_K \rightarrow 1
    \end{align*}
    Then,
    \begin{align*}
    |C_K^+| = h \frac{[K:K^+]}{[U_K:U_K^+]} = h \frac{2^r}{[U_K:U_K^+]}
    \end{align*}
    where $h$ is the class group number of $K$ and $r$ its number of real immersions. Since $|C_K^+|$ was odd, $[U_K:U_K^+]=2^r$ and therefore $|C_K^+|=h$. Then, $C_K = C_K^+$. Clearly, if $u$ is a square, then $u$ is totally positive. Suppose now that $u$ is totally positive, that is $u \in U^+$. By the Dirichlet Unit Theorem 
    \begin{align*}
    U = \{\pm 1\} \times \ZZ^{r+s-1}
    \end{align*}
    where $r$ and $s$ are the numbers of real and complex embeddings, respectively. Since $K$ is totally real, $s=0$ and $r=[K:\QQ]$. Then,
    \begin{align*}
    U_K/U_K^2 \cong \{\pm 1\} \times (\ZZ^{r-1})/(\ZZ^{r-1})^2.
    \end{align*}
    Therefore,
    \begin{align*}
    [U_K:U_K^2] = 2 \cdot 2^{r-1} = 2^r
    \end{align*}
    Then $2^r=[U_K:U_K^2]=[U_K:U_K^+][U_K^+:U_K^2]= 2^r[U_K^+:U_K^2]$, and we can conclude that $U_K^+ = U_K^2$
   \end{proof}

   In particular, this implies that if the narrow class number of $K$ is odd, $u$ is a norm, and we are done. However, this hypothesis, jointly with the hypothesis of $h(K(\sqrt{-3}))$ being odd, are really strong assumptions. Then, we will want to deduce a similar result, but with less strong hypotheses, and using hardly that $h(K(\sqrt{-3}))$ is odd. 
   
   First, we will state the following Lemma.
   
   \begin{lemma}
   	\label{class-field-1}
   	Let $K$ be a totally real field. Then $h(K)|h(K(\sqrt{-3}))$. Furthermore, if there exist a prime ideal in $K$ above $3$ ramifying in $K(\sqrt{-3})$ then $h^+(K)|h(K(\sqrt{-3}))$
   \end{lemma}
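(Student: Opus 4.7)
The strategy is pure class field theory. Set $L := K(\sqrt{-3})$, a totally imaginary (CM) quadratic extension of the totally real field $K$, and denote by $H_K$, $H_K^+$, $H_L$ the Hilbert class field of $K$, the narrow Hilbert class field of $K$, and the Hilbert class field of $L$, respectively, so that $[H_K:K]=h(K)$, $[H_K^+:K]=h^+(K)$, and $[H_L:L]=h(L)$. The plan is to embed $H_K$ (respectively $H_K^+$) composed with $L$ inside $H_L$, and read the divisibility off the degrees.

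For the first divisibility, I would show that $H_K \cdot L \subseteq H_L$. The extension $H_K/K$ is unramified at every place, finite and infinite, and every infinite place of $L$ is complex, hence unramifiable in any extension; therefore $H_K \cdot L / L$ is unramified at every place of $L$ and sits inside $H_L$. By the standard translation formula, $[H_K \cdot L : L] = [H_K : H_K \cap L]$. Since $H_K \cap L$ is a subfield of $L$ containing $K$, it is either $K$ or $L$; the latter is ruled out because $L/K$ ramifies at every archimedean place of $K$ (which becomes complex in $L$), while $H_K/K$ is unramified at the archimedean places, so $H_K$ is totally real and cannot contain $L$. Hence $H_K \cap L = K$ and $h(K) = [H_K \cdot L : L]$ divides $[H_L : L] = h(L)$.

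For the second divisibility, I would run the same argument with $H_K^+$ in place of $H_K$. The extension $H_K^+ \cdot L / L$ is unramified at every finite prime (since $H_K^+/K$ is) and at every infinite prime of $L$ (since these are all complex), so $H_K^+ \cdot L \subseteq H_L$, and $[H_K^+ \cdot L : L] = [H_K^+ : H_K^+ \cap L]$. Thus the whole argument reduces to showing $H_K^+ \cap L = K$. This is precisely where the extra hypothesis enters: if one had $L \subseteq H_K^+$, then $L/K$ would be unramified at every finite prime of $K$, contradicting the assumption that some prime of $K$ above $3$ ramifies in $L$. Therefore $[H_K^+ \cdot L : L] = h^+(K)$, which divides $h(L)$.

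The one point that requires care is the distinct roles played by archimedean primes in the two statements. In the unrestricted case, the archimedean ramification of $L/K$ alone forces $H_K \cap L = K$; but in the narrow-class-field case, archimedean ramification of $L/K$ is permitted by the very definition of $H_K^+$, so one needs an independent source of ramification at a finite prime of $K$ to rule out $L \subseteq H_K^+$. This is exactly what the ramification hypothesis at $3$ supplies, and it is the only delicate step in the proof.
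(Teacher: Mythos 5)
Your proof is correct and is essentially the argument the paper has in mind: the paper's one-line proof (``follows directly from class field theory and the fact that $K(\sqrt{-3})/K$ is ramified since it is a CM extension'') is precisely the compositum-with-the-Hilbert-class-field argument you spell out, with archimedean ramification forcing $H_K\cap L=K$ and the hypothesized finite ramification above $3$ forcing $H_K^+\cap L=K$. You have simply made explicit the details the paper leaves to the reader, including the genuinely delicate point that the narrow case needs ramification at a finite prime.
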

   \begin{proof}
   	This follows directly from class field theory and the fact that $K(\sqrt{-3})/K$ is a ramified extension since it is a CM field extension.
   \end{proof}
   
   \begin{remark}
   \label{ramification_3}
   	Observe that 
   	  \begin{align*}
   		\mathcal{D}_{K(\sqrt{-3})/K} =\frac{ - (4) \cdot (3)}{(\frak{s} \frak{t})^2}
   	\end{align*} 
   	where $(4)$ and $(3)$ denote their prime factorization in $K$, $\frak{s}$ is the greatest ideal in $K$ such that $\frak{s}^2|(3)$ and $\frak{t}$ the greatest ideal in $K$ such that $\frak{t}^2|2$ and $-3$ is a square $\pmod{\frak{s}^2\frak{t}^2}$. 
   	Then, to guarantee having a prime ideal in $K$ above $3$ ramifying in $K(\sqrt{-3})$, it is enough to have a prime ideal above $3$ of odd ramification index. Here is where we need the condition (2) of Assumption~\ref{assumption}.
   \end{remark}

   Then we have proved the following, 
  
   \begin{proof}[Proof of Theorem \ref{divpomey}]

    Since $V_3 \neq \emptyset$, there exist a prime ideal in $K$ above $3$ ramifying in $K(\sqrt{-3})$. Then $h^+(K)$ is odd by Lemma \ref{class-field-1}. Therefore, the unit $u$ described at the beginning of the subsection \ref{general_of_pomey}, i.e. the unit such that $d^t \cdot u = N(\beta)$, is a square by Lemma \ref{thm:1}. Therefore, $d^t$ is of the form $x^2+3y^2$. 
\end{proof}
   
      \begin{proof}[Proof of Theorem \ref{pomey_case}]

    Follows directly from Theorem \ref{divpomey} and the analysis made at the subsection \ref{mimic_pomey}.
\end{proof}

  Finally, from Theorem \ref{pomey_case} and Theorem \ref{3midabc} we conclude Theorem \ref{thm:main}.
   
   

\section{Examples of fields K} \label{ejemplos}
   
We want to give an example of totally real fields $K$ satisfying Assumption~\ref{assumption}. A natural question is whether there exists a quadratic extension satisfying the assumption. 
    
    \begin{prop}
    	\label{quadratic}
     Let $p$ be an odd prime. Then there are no quadratic totally real extensions $K$ of $\QQ$ such that $h(K(\sqrt{-p}))$ is odd and such that $p$ splits in $K$.
    \end{prop}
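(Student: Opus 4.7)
The plan is to derive a contradiction by showing that, under the stated hypotheses, $h(K(\sqrt{-p}))$ is forced to be even. Write $L = K(\sqrt{-p})$, a CM quadratic extension of the real quadratic field $K$, and put $G = \Gal(L/K)$. My main tool would be Chevalley's ambiguous class number formula for the cyclic extension $L/K$:
$$|\Cl(L)^{G}| \;=\; \frac{h(K)\,\prod_v e_v}{[L:K]\cdot[E_K : E_K \cap N_{L/K}(L^\times)]},$$
where $v$ ranges over all places of $K$ and $E_K = \mathcal{O}_K^\times$.

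First I would count the places of $K$ that ramify in $L$. Both archimedean places of $K$ are real, but $L$ is totally imaginary, so each ramifies and contributes $e_v = 2$. The hypothesis that $p$ splits in $K$ gives $p\mathcal{O}_K = \pp_1\pp_2$ with $v_{\pp_i}(-p) = 1$; since the discriminant $-4p$ of the Kummer generator $X^2+p$ has odd valuation at each $\pp_i$, both $\pp_i$ ramify in $L/K$. Hence at least $t \geq 4$ places of $K$ ramify in $L$, and $\prod_v e_v = 2^t \geq 16$.

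Next I would bound the unit index $[E_K : E_K \cap N_{L/K}(L^\times)]$. Since $N_{L/K}(u) = u^2$ for $u \in K^\times$, we have $E_K^2 \subseteq E_K \cap N_{L/K}(L^\times)$, so this index divides $[E_K : E_K^2]$. As $K$ is real quadratic, Dirichlet's theorem gives $E_K \cong \{\pm 1\}\times\Z$, hence $[E_K : E_K^2] = 4$.

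Combining these two estimates, $|\Cl(L)^G| = h(K)\cdot 2^{t-1-a}$ where $2^a = [E_K : E_K \cap N_{L/K}(L^\times)] \leq 4$, so $t-1-a \geq 4-1-2 = 1$, whence $|\Cl(L)^G|$ is even. Since it divides $h(L)$, we conclude $h(L)$ is even, contradicting the hypothesis that $h(K(\sqrt{-p}))$ is odd. The main obstacle is to correctly handle the contribution of the archimedean places in Chevalley's formula; if delicate, one can replace the exact identity by the equivalent genus-theoretic inequality $\mathrm{rank}_2\,\Cl(L)^G \geq t-1-\mathrm{rank}_2\bigl(E_K/(E_K\cap N_{L/K}(L^\times))\bigr) \geq 1$, which is already enough to force $2 \mid h(L)$.
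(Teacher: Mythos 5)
Your proof is correct and follows essentially the same route as the paper: both arguments apply Chevalley's ambiguous class number formula to $K(\sqrt{-p})/K$, use the two ramified primes above $p$ (plus the ramification forced by the extension being CM) to make the power of $2$ in the numerator large, bound the unit-norm index $[\mathcal{O}_K^\times : \mathcal{O}_K^\times \cap N(L^\times)]$ by $4$, and conclude that the ambiguous class number is even, contradicting the oddness of $h(K(\sqrt{-p}))$. The only cosmetic difference is that you work with the full formula (so you never need the paper's preliminary observation, via its Lemma on $h(K)\mid h(K(\sqrt{-3}))$, that $h(K)$ is odd), which slightly streamlines the same argument.
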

    
    \begin{proof}
    Suppose there exist such a quadratic extension $K:=\QQ(\sqrt{d})$ and that $\pp$ splits as $\pp_1 \pp_2$ in $K$. Then, $K(\sqrt{-p})$ ramifies at $\pp_1$ and $\pp_2$ and probably at the primes above $2$ in $K$. By Lemma \ref{class-field-1}, we also have that $h(K)$ is odd. Let $\mathcal{A}_{K(\sqrt{-p})/K}$ be the number of ambiguous classes of $K(\sqrt{-p})/K$ which are in the $2$-class group $K(\sqrt{-p})$. Then using the \textit{ambiguous class number formula} of Chevalley (see \cite[Chapter 13, Lemma 4.1]{lang2012cyclotomic} and \cite[Section 3]{mccall1995imaginary}) we obtain that
    \begin{align*}
    	\mathcal{A}_{K(\sqrt{-p})/K} = \frac{2^{1+\gamma} }{[\mathcal{O}_K^{\times}:\mathcal{O}_K^{\times} \cap \text{Norm}(K(\sqrt{-3})^{\times})]}
    \end{align*}
    where $\gamma$ is the number of prime ideals of $K$ ramifying in $K(\sqrt{-p})$. Since $K(\sqrt{-p})$ ramifies at $\pp_1$ and $\pp_2$, we obtain that $\gamma \geq 2$. Since $K$ is totally real and $-1$ is not a norm we have that $[\mathcal{O}_K^{\times}:\mathcal{O}_K^{\times} \cap  \text{Norm}(K(\sqrt{-p})^{\times})]= 2^e$ with with $1 \leq e \leq 2$. Then, $2|\mathcal{A}_{K(\sqrt{-p})/K}$. However, this is a contradiction since $\text{Cl}^{(2)}(K(\sqrt{-p})) = \{e\}$ because $h(K(\sqrt{-p}))$ was odd. Then we conclude that there not exist a quadratic extension of $\QQ$ satisfying what we wanted.
    \end{proof}
    
    Therefore, it implies in particular the following Corollary.
    \begin{coro}
    There are no quadratic totally real extensions of $\QQ$ satisfying  Assumption~\ref{assumption}.
    \end{coro}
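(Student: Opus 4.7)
The plan is to reduce the Corollary to Proposition~\ref{quadratic} by analyzing the splitting behavior of $3$ in a quadratic totally real field. Let $K$ be a quadratic totally real extension of $\QQ$ and suppose it satisfies Assumption~\ref{assumption}. Since $[K:\QQ]=2$, the prime $3$ admits only three possible factorizations in $\mathcal{O}_K$: split ($e=f=1$ at both primes), inert ($e=1$, $f=2$), or ramified ($e=2$, $f=1$).

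First I would eliminate the inert and ramified cases using the local conditions in Assumption~\ref{assumption}. Condition (1), $T_3\neq\emptyset$, forces some prime above $3$ to have residual degree $1$, which rules out the inert case. Condition (2), $V_3\neq\emptyset$, forces some prime above $3$ to have odd ramification index, which rules out the fully ramified case (where the only ramification index is $2$). Therefore the only remaining possibility is that $3$ splits in $K$.

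At this point the Corollary follows immediately by applying Proposition~\ref{quadratic} with $p=3$: that Proposition asserts that no quadratic totally real extension of $\QQ$ can simultaneously have $3$ split and $h(K(\sqrt{-3}))$ odd. Since Assumption~\ref{assumption} requires both (the splitting just deduced, and condition (3) of the assumption), we obtain a contradiction. There is no real obstacle here; the entire content of the Corollary is packaged into Proposition~\ref{quadratic}, so the proof reduces to the small case analysis above and a one-line invocation of the previous proposition.
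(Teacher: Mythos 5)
Your proof is correct and follows essentially the same route as the paper: the paper's own proof also observes that Assumption~\ref{assumption} for a quadratic field amounts to $3$ splitting and then invokes Proposition~\ref{quadratic} with $p=3$. Your explicit case analysis (inert excluded by $T_3\neq\emptyset$, ramified excluded by $V_3\neq\emptyset$) just spells out what the paper states in one line.
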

    \begin{proof}
    This follows from Proposition \ref{quadratic} and the fact that satisfying Assumption~\ref{assumption} in a quadratic field is equivalent to asking that $3$ splits.
    \end{proof}

For a general totally real field we have the following result. 
        
    	\begin{prop}
    		Let $p$ be an odd prime. Let $K$ be a totally real field with $h(K(\sqrt{-p}))$ odd and such that $V_p \neq \emptyset$. 
            Then $V_p = \{\pp\}$ with $\pp$ a prime ideal of $K$ above $p$. In particular,  the only odd prime ideal ramifying in $K(\sqrt{-p})/K$ is $\pp$. 
    	\end{prop}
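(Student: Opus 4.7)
The plan is to mimic the argument of Proposition \ref{quadratic}: assume for contradiction that $|V_p|\geq 2$, apply Chevalley's ambiguous class number formula to the quadratic CM extension $L:=K(\sqrt{-p})$, and produce a nontrivial $2$-torsion class in $\Cl(L)$, contradicting the assumption that $h(L)$ is odd.

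Set $n:=[K:\QQ]$ and let $\gamma_f$ be the number of finite primes of $K$ ramifying in $L/K$. All $n$ real places of $K$ become complex in the CM field $L$, so they contribute a factor $2^n$ to the product of local ramification indices. For any $\qq\in V_p$ one has $v_\qq(-p)=e(\qq\mid p)$ odd, hence $K_\qq(\sqrt{-p})/K_\qq$ is ramified and $\gamma_f \geq |V_p|$. Chevalley's formula (cf.~\cite[Chapter 13, Lemma 4.1]{lang2012cyclotomic}) applied to the cyclic degree-$2$ extension $L/K$ then reads
\begin{align*}
\bigl|\Cl(L)^{\Gal(L/K)}\bigr| \;=\; \frac{h(K)\cdot 2^{n+\gamma_f-1}}{\bigl[\mathcal{O}_K^{\times}:\mathcal{O}_K^{\times}\cap N_{L/K}(L^{\times})\bigr]}.
\end{align*}
By Lemma \ref{class-field-1}, $h(K)\mid h(L)$, so $h(K)$ is odd. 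Dirichlet's unit theorem gives $\mathcal{O}_K^{\times}/(\mathcal{O}_K^{\times})^2\cong(\ZZ/2\ZZ)^n$, and every square $u^2\in\mathcal{O}_K^{\times}$ equals $N_{L/K}(u)$ with $u\in K\subset L$, so the denominator above equals $2^e$ for some $0\leq e\leq n$.

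Now, $\Cl(L)^{\Gal(L/K)}$ is a subgroup of $\Cl(L)$, whose order $h(L)$ is odd by hypothesis, so the $2$-part $2^{n+\gamma_f-1-e}$ of the right-hand side must be $1$. This forces $e=n+\gamma_f-1$, and the bound $e\leq n$ yields $\gamma_f\leq 1$. Combined with $\gamma_f\geq|V_p|\geq 1$ this gives $\gamma_f=|V_p|=1$, proving $V_p=\{\pp\}$. Since the relative discriminant $\mathcal{D}_{L/K}$ divides $(-4p)$, every odd finite prime of $K$ ramifying in $L/K$ lies above $p$; as only one finite prime ramifies at all, $\pp$ is a fortiori the unique odd ramified prime.

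The main obstacle is simply keeping all the factors of $2$ in Chevalley's formula straight: the infinite-place contribution $2^n$ exactly matches the size of $\mathcal{O}_K^{\times}/(\mathcal{O}_K^{\times})^2$, so any slack in either bound would permit an extra ramified finite prime and weaken the conclusion. Otherwise, the argument is a direct generalization of the quadratic case already handled in Proposition \ref{quadratic}.
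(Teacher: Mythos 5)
Your proof is correct and follows essentially the same route as the paper: both apply Chevalley's ambiguous class number formula to the CM extension $K(\sqrt{-p})/K$, use Dirichlet's unit theorem to bound the norm-unit index by $2^{[K:\QQ]}$, and conclude from the oddness of $h(K(\sqrt{-p}))$ that at most one finite prime can ramify, so $V_p$ is a singleton. The only cosmetic difference is that the paper argues by contradiction from $\gamma\geq 2$ (working directly with the $2$-part, so $h(K)$ does not appear), whereas you derive the bound $\gamma_f\leq 1$ directly after noting $h(K)$ is odd via Lemma \ref{class-field-1}.
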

    	
    	\begin{proof}
                Suppose there exists a prime ideal $\pp' \in V_p$ different from $\pp$. In particular, it ramifies in $K(\sqrt{-p})/K$. Then, using the Chevalley formula for ambiguous class numbers, we obtain that
                  \begin{equation}
                  \label{eq:ambiguous}
    	    \mathcal{A}_{K(\sqrt{-p})/K} = 
                 \frac{2^{\gamma + r + s -1} }{[\mathcal{O}_K^{\times}:\mathcal{O}_K^{\times} \cap \text{Norm}(K(\sqrt{-3})^{\times})]}
                 \end{equation}
                 where $\gamma$ is the number of finite prime ideals of $K$ ramifying in $K(\sqrt{-p})$, and $r$ and $s$ are the real and complex embeddings of $K$ respectively. Since $K$ is totally real, the primitive roots of unity in there are $\{\pm 1\}$. Then, using Dirichlet's unit theorem we obtain that $[\mathcal{O}_K^{\times}:\mathcal{O}_K^{\times} \cap \text{Norm}(K(\sqrt{-3})^{\times})] = 2^k$, with $0\leq k \leq r+s$. Therefore, from \eqref{eq:ambiguous} we have that $\mathcal{A}_{K(\sqrt{-p})/K} = 2^e$ with $\gamma - 1 \leq e$. Since we have, at least two prime ideals $\pp$ and $\pp'$ of $K$ ramifying in $K(\sqrt{-p})$ we deduce that $\gamma \geq 2$. Therefore $\gamma -1 \geq 1$ and thus $2|\mathcal{A}_{K(\sqrt{-p})/K}$, arriving to a contradiction since $h(K(\sqrt{-p}))$ was odd.
    	\end{proof}

\begin{remark}
\label{options}
If a totally real field satisfies the hypothesis of Theorem \ref{thm:main}, it has to satisfy then that $V_3=\{\pp_3\}$. The  possibilities for the ramification of $3$ are the following: 
        \begin{enumerate}
        \item $3$ totally ramifies in $K$ and $[K:\QQ]$ is odd,
        \item $3$ splits as $\pp_0^k \cdot \prod_{i=1}^n \pp_i^{2k_i}$ where $k$ is odd and $f(\pp_{i_0}|p)=1$ for some $\pp_{i_0}$.
        \end{enumerate}
        
        In particular, in the second case $K$ cannot be a Galois extension.
\end{remark}

     \subsection{Cubic extensions}   

     In \cite{alvarado2021robust}, while developing an algorithm to compute the $S$-unit solutions, they listed in Table 8.1 the totally real cubic extensions $K$ in which $2$ totally ramifies and $|\Delta_K| \leq 2000$. Then they verify that the $S$-unit solutions satisfy either condition (A) or (B) of \cite[Theorem 3]{freitas2015asymptotic}.

     In our case, we do not have to deal with $S$-unit equation, so the computation is straightforward since we only have to compute $h(K(\sqrt{-3}))$ and $3 \cdot \mathcal{O}_K$. Let $\mathscr{K}_X$ denote the set of totally real number fields satisfying Assumption~\ref{assumption} with $h(K(\sqrt{-3}))$ odd and which have absolute discriminant $\Delta_K$ satisfying $|\Delta_K|\leq X$. Table \ref{table:1} lists all the fields of $\mathscr{K}_X$ for $X=2000$. In particular, it follows from Theorem \ref{thm:main} that the Asymptotic Fermat Last Theorem, for exponent $p \equiv 1 \pmod{6}$, holds for these cubic fields listed.

     \begin{table}[h!]
    \centering
 \begin{tabular}{llll}
        \toprule
        $f_K$ & $|\Delta_K|$ & $h(K(\sqrt{-3})$ & Ramification of $3$\\
        \midrule
        $x^3-3x-1$ & $81$ & $1$ & $\pp^3$\\
        $x^3-x^2-4x+1$ & $321$ & $1$ & $\pp_1 \pp_2^2$\\
        $x^3-x^2-5x+3$ & $564$ & $1$ & $\pp_1 \pp_2^2$\\
        $x^3-6x-3$ & $621$ & $1$ & $\pp^3$\\
        $x^3-6x-2$ & $756$ & $1$ & $\pp^3$\\
        $x^3-6x-1$ & $837$ & $1$ & $\pp^3$\\
        $x^3-x^2-6x+2$ & $993$ & $1$ & $\pp_1 \pp_2^2$\\
        $x^3-x^2-9x+12$ & $1101$ & $1$ & $\pp_1 \pp_2^2$\\
        $x^3-x^2-8x-3$ & $1425$ & $1$ & $\pp_1 \pp_2^2$\\
        $x^3-x^2-7x+1$ & $1524$ & $1$ & $\pp_1 \pp_2^2$\\
        $x^3-12x-14$ & $1620$ & $1$ & $\pp^3$\\
        $x^3-9x-6$ & $1944$ & $1$ & $\pp^3$\\
        \bottomrule
        \end{tabular}
\caption{Fields in $\mathscr{K}_{2000}$ and how $3$ splits.}
\label{table:1}
\end{table}

\begin{remark}
Observe that $3$ splits in the way explained in Remark \ref{options}. Case $1$ of the Table \ref{table:1} corresponds to the totally real subfield of $\QQ(\zeta_9)$. In the subsection \ref{cyclotomic}, we deal deeply with these cases.
\end{remark}

 \subsection{Cyclotomic extensions} \label{cyclotomic}

Let $L$ be a cyclotomic extension. Suppose $L=\QQ(\zeta_{p^k})$ for $p$ an odd prime number and $k$ a positive integer. We first would like to find examples that satisfy the hypothesis about the splitting of $3$ of Theorem \ref{thm:main}. Let $K$ be the totally real subfield of $L$. Since $K$ is a Galois extension, $3$ has to ramify in $K$ as it is explained in \ref{options}. Then, the only examples that may satisfy the desired hypothesis are the fields $\QQ(\zeta_{3^n})^+$.The following result guarantees us the odd parity of the class number of these fields.







\begin{prop}
\label{prop-parity}
(\cite[Proposition 3]{HORIE_2002}) Let $\ell$ be a prime congruent to $2$ or $5$ modulo $9$. Then, for any positive integer $n$, the class number of the cyclotomic field $\QQ(\zeta_{3^n})$ is relatively prime to $\ell$.
\end{prop}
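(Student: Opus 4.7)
The plan is to work inside the cyclotomic $\ZZ_3$-tower $K_n = \QQ(\zeta_{3^{n+1}})$ over $K_0 = \QQ(\zeta_3)$, using the base case $h(K_0) = 1$ to propagate triviality of the $\ell$-part of the class group up the tower. Set $\Gamma = \Gal(K_\infty/K_0) \cong \ZZ_3$, let $A_n$ denote the $\ell$-Sylow subgroup of $\Cl(K_n)$, and form the Iwasawa module $X = \varprojlim_n A_n$, a compact finitely generated module over $\Lambda = \ZZ_\ell[[\Gamma]] \cong \ZZ_\ell[[T]]$. The whole question is packaged as: show $X = 0$.

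The first input I would invoke is Washington's theorem that for $\ell \neq 3$ the order $|A_n|$ stays bounded in $n$; hence $X$ is finite and, for $n$ large, $A_n$ is a quotient of $X$. Since $\Lambda$ is local with maximal ideal $\mathfrak m = (\ell, T)$, Nakayama reduces $X = 0$ to $X/\mathfrak m X = 0$.

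Next I would feed in the congruence hypothesis. A direct check in $(\ZZ/9)^\times$ shows that $\ell \equiv 2, 5 \pmod 9$ is exactly the condition that $\ell$ has order $6$, equivalently that $\ell$ is inert in $K_1 = \QQ(\zeta_9)$. A closed-subgroup argument in $\Gamma \cong \ZZ_3$ then forces the decomposition group at $\ell$ to be all of $\Gamma$, so $\ell$ remains inert at every layer of the tower. Combined with the fact that only the prime above $3$ ramifies in $K_\infty/K_0$, a Chevalley ambiguous class number computation applied to $K_n/K_0$ expresses $|A_n^\Gamma|$ as a product of $|A_0|$, local norm indices at the (unique) ramified prime, and a unit index; all of these are coprime to $\ell$, so $X^\Gamma = 0$. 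Because $X$ is finite the standard Euler-characteristic identity $|X^\Gamma| = |X_\Gamma|$ also gives $X_\Gamma = 0$.

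The main obstacle is the last mile from $X_\Gamma = 0$ to $X/\mathfrak m X = 0$, since Nakayama also demands control of the $\ell$-torsion sitting above the coinvariants. I would close this via a reflection (Spiegelungssatz) argument: the inertness of $\ell$ at every level kills the local contributions in the reflection inequality, so the minus part of $X$ under complex conjugation is dominated by the plus part, which is controlled by $h(K_n^+)$. Threading this through the inductive setup — and using that the plus part at the base is trivial — yields $X = 0$ and hence $\ell \nmid h(K_n)$ for every $n$, as desired.
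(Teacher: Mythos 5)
Your reductions at the start are fine: $\ell\equiv 2,5\pmod 9$ does mean $\ell$ generates $(\ZZ/9)^\times$ and hence stays inert up the tower, Washington's theorem does make $X=\varprojlim A_n$ finite with $A_n\cong X$ for large $n$, and the Chevalley formula for $K_n/\QQ(\zeta_3)$ (one totally ramified prime, $h(\QQ(\zeta_3))=1$, no real places) really does give $A_n^{\Gamma}=0$, hence also $X^\Gamma=X_\Gamma=0$. But the algebraic frame you hang this on is wrong for $\ell\neq 3$: here $\Gamma\cong\ZZ_3$ while the coefficients are $\ZZ_\ell$, so $\ZZ_\ell[[\Gamma]]$ is \emph{not} $\ZZ_\ell[[T]]$ and is not local (since $3\in\ZZ_\ell^\times$, the finite-level group rings $\ZZ_\ell[\ZZ/3^n]$ split into products), so there is no maximal ideal $(\ell,T)$ and no Nakayama reduction of $X=0$ to $X/\mathfrak m X=0$. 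And indeed the implication you would need fails outright: a finite $\ell$-group with a fixed-point-free action of a $3$-group (e.g. $\ZZ/7$ with an order-$3$ automorphism) has trivial invariants and coinvariants without being trivial. So everything you prove up to this point is compatible with $A_n\neq 0$; the vanishing of the ambiguous classes is the easy part (it is the classical Iwasawa argument, which only suffices when $\ell=3$), and the entire content of the proposition lies in the step you defer.

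The repair you sketch does not close it. Leopoldt's Spiegelungssatz gives inequalities in the direction ``plus (or $\chi$-) part bounded by minus (or $\omega\chi^{-1}$-) part'' after adjoining $\zeta_\ell$ — it is the reason $\ell\mid h^+$ forces $\ell\mid h^-$ — not the domination of the minus part by the plus part that you assert; and in this tower it is precisely the minus class numbers $h^-(\QQ(\zeta_{3^n}))$ that grow, so no such domination can hold. Worse, the input you want to reduce to, namely $\ell\nmid h(\QQ(\zeta_{3^n})^+)$ for all $n$, is itself a theorem of essentially the same depth (a Weber-type statement for $p=3$), so the argument would be circular even if the reflection step went the right way. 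Note also that the paper does not prove this proposition at all: it is quoted from Horie, whose proof controls the minus part via Stickelberger/analytic estimates exploiting exactly the inertness of $\ell$, with separate arguments for the plus part. Your proposal correctly isolates the inertness hypothesis and the genus-theoretic vanishing $A_n^\Gamma=0$, but the decisive step — passing from fixed-point-freeness to triviality — is missing, and the suggested reflection argument as stated would fail.
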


Then, using this latter result we are able to prove that the main theorem \ref{thm:main} works on $\QQ(\zeta_{3^n})^+$ as it is stated in \ref{cubic_cyclo}.

\begin{proof}[Proof of Theorem \ref{cubic_cyclo}]
Let us denote $K=\QQ(\zeta_{3^n})^+$. Observe that $K(\sqrt{-3})= \QQ(\zeta_{3^n})$, then $h(K(\sqrt{-3}))$ is odd by Proposition \ref{prop-parity}. Since $[K:\QQ]=3^{n-1}$ we have that $S_3=T_3=V_3=\{\pp_3\}$ with $\pp_3$ the only prime above $3$ in $K$. Then the result follows from Theorem \ref{thm:main}.
\end{proof}


In \cite[Theorem 4]{freitas2020asymptotic}, Freitas, Kraus, and Siksek proved that under the condition of $2$ being inert in the maximal subfield of $\QQ(\zeta_{q^r})^+$ of $q$-power degree, the asymptotic Fermat's Last Theorem holds for such a field as long as $q\geq 5$. As it is stated in \cite[Remark 3.3]{freitas2020asymptotic}, the proof of Theorem 4 fails for $q=3$ since there are unit solutions to the equation 
\begin{equation}
\label{eq:S-unit}
\lambda + \mu = 1, \ \ \lambda, \mu \in \mathcal{O}_K^+.
\end{equation}

However, for our result, we do not have to deal with the $S$-unit equation, which allows us to cover the case $q=3$, at least for half of the prime exponents. Observe that another important difference with the case $q > 3$ is that we obtain a result for $q = 3$ over the full extension $\QQ(\zeta_{3^r})^+$. In \cite[Theorem 1.2]{chen2024fermat}, Chen et al. proved a similar result for the field $\QQ(\sqrt{5})$. They established Fermat's Last Theorem over $\QQ(\sqrt{5}) = \QQ(\zeta_5)^+$ for all prime exponents $p$ satisfying $p \equiv 5, 7 \pmod{8}$ or $p \equiv 19, 41 \pmod{48}$. Note that this is not an asymptotic result, since it holds for all such primes $p$. Aside from this result, for $q > 3$, no results on Fermat's Last Theorem are known over the full extensions $\QQ(\zeta_{q^r})^+$; the known results hold only over proper subfields of these fields.

Finally, let us observe the following:

\begin{prop}
$2$ is inert in $\QQ(\zeta_{3^n})^+$ for all $n$.
\end{prop}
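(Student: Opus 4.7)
The plan is to use the standard identification $\Gal(\QQ(\zeta_{3^n})/\QQ) \cong (\ZZ/3^n\ZZ)^\times$, under which complex conjugation corresponds to $-1$, so that $\Gal(K/\QQ) \cong (\ZZ/3^n\ZZ)^\times/\{\pm 1\}$ where $K = \QQ(\zeta_{3^n})^+$. Since $2$ is coprime to $3$ it is unramified in $K$, so $2$ is inert in $K$ if and only if the Frobenius at $2$ generates $\Gal(K/\QQ)$, equivalently if and only if the class of $2$ in $(\ZZ/3^n\ZZ)^\times/\{\pm 1\}$ has order exactly $3^{n-1}$.

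The heart of the matter, and really the one nontrivial ingredient, is the classical fact that $2$ is a primitive root modulo $3^n$ for every $n \geq 1$. This follows from the standard Hensel-style lifting criterion: $2$ is a primitive root modulo $3$, and $2^2 = 4 \not\equiv 1 \pmod{9}$, so $2$ remains a primitive root modulo every higher power of $3$. Consequently the order of $2$ in $(\ZZ/3^n\ZZ)^\times$ is $2 \cdot 3^{n-1}$.

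The last step is then bookkeeping. Since $(\ZZ/3^n\ZZ)^\times$ is cyclic of order $2 \cdot 3^{n-1}$, its unique element of order $2$ is $-1$; since $\langle 2 \rangle$ is the whole group, it contains $-1$, and therefore $2^{3^{n-1}} \equiv -1 \pmod{3^n}$ while no smaller power of $2$ is congruent to $\pm 1$. Thus the image of $2$ in $(\ZZ/3^n\ZZ)^\times/\{\pm 1\}$ has order exactly $3^{n-1} = [K:\QQ]$, and $2$ is inert in $K$. The only real subtlety is ensuring that the order in the quotient is not strictly smaller than $3^{n-1}$, and this is exactly what the primitive-root statement guarantees — which is why that intermediate step is the crucial one.
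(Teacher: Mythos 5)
Your proof is correct, and it takes a different route from the paper only in the sense that the paper offers no argument at all: it simply cites \cite[Lemma 2.1]{freitas2020asymptotic}, which packages the same cyclotomic criterion for the splitting of $2$ in $\QQ(\zeta_{3^n})^+$. Your version makes the content explicit: you identify $\Gal(\QQ(\zeta_{3^n})^+/\QQ)$ with $(\ZZ/3^n\ZZ)^{\times}/\{\pm 1\}$, note that $2$ is unramified since $2\nmid 3$, and reduce the statement to the order of $2$ in that quotient; the key arithmetic input, that $2$ is a primitive root modulo $3^n$ for every $n$ (primitive root mod $3$ together with $2^2=4\not\equiv 1 \pmod 9$ and the standard lifting criterion), is exactly right, and the passage to the quotient is clean because the image of a generator of the cyclic group $(\ZZ/3^n\ZZ)^{\times}$ generates the quotient, forcing order $3^{n-1}=[\QQ(\zeta_{3^n})^+:\QQ]$. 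So your argument is self-contained and elementary, at the cost of a few lines, whereas the paper's citation buys brevity by outsourcing precisely this primitive-root/Frobenius computation; the two are mathematically the same criterion.
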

\begin{proof}
Follows directly from \cite[Lemma 2.1]{freitas2020asymptotic}
\end{proof}

Then, the fields $\QQ(\zeta_{3^r})^+$ are examples of fields in which the asymptotic Fermat's  Last Theorem holds for primes $p$ such that $p \equiv 2 \pmod{3}$, but do not satisfy neither condition (A) nor (B) of \cite[Theorem 3]{freitas2015asymptotic}. Indeed, let $\mathfrak{P}$ be the prime ideal above $2$. Then, (A) does not hold since in this case $T=\emptyset$ and (B) does not hold since there are unit solutions $(\lambda,\mu)$ to the unit Equation (\ref{eq:S-unit}) (see \cite[Lemma 16]{siksek2024curves}). Then $v_{\mathfrak{P}}(\lambda \mu) = 0 \not\equiv 1 \pmod{3}$ where $S=\{\mathfrak{P}\}$.

\bibliographystyle{plain}
\bibliography{biblioppp}
\end{document}